\newcommand{\csch}{\operatorname{csch}}
\newcommand{\R}{{\mathbb R}}
\def\qed{\hbox to 0pt{}\hfill$\rlap{$\sqcap$}\sqcup$}
\newtheoremstyle{mystyle}               
  {}                
  {}                
  {}        
  {}                
  {\bfseries \itshape}       
  {.}      
  { }      
  {}       
\newtheorem{theorem}{Theorem}[section]
\newtheorem{lemma}[theorem]{Lemma} 
\newtheorem{corollary}[theorem]{Corollary}
\theoremstyle{definition}
\theoremstyle{mystyle}
\title{Constant sign Green's function of a second order perturbed periodic problem\thanks{Partially supported by Xunta de Galicia (Spain), project EM2014/032 and AIE, Spain and FEDER, grant PID2020-113275GB-I00}}
\author{Alberto Cabada, Luc{\' i}a L\'opez-Somoza and Mouhcine Yousfi\\
	CITMAga, 15782 Santiago de Compostela, Galicia, Spain,\\ Departamento de Estat\'{\i}stica, An\'{a}lise Matem\'{a}tica e Optimizaci\'on, \\Facultade de Matem\'aticas, 
	Universidade de Santiago de Com\-pos\-te\-la, 15782\\ Santiago de Compostela, Galicia, Spain.\\
	alberto.cabada@usc.es; lucia.lopez.somoza@usc.es; yousfi.mouhcine@usc.es}
\date{}
\begin{document}
\maketitle
\begin{abstract}
In this paper we are interested in obtaining the exact expression and the study of the constant sign of the Green's function related to a  second order perturbed periodic problem coupled  with integral boundary conditions at the extremes of the interval of definition. 

To obtain the expression of the Green's function related to this problem we use the theory presented in \cite{CLY} for general non-local perturbed boundary value problems. Moreover, we will characterize the parameter's set where such Green's function has constant sign. To this end, we need to consider first a related second order problem without integral boundary conditions, obtaining the properties of its Green's function and then using them to compute the sign of the one related to the main problem.
\end{abstract}

\section{Introduction}
In this paper we will study the regions of constant sign of the Green's function related to the following perturbed second order periodic problem, coupled with integral conditions on the boundary
\begin{equation}\label{eg}
	\left\{
	\begin{aligned}
		u''(t)+M u(t)&=\sigma(t),\;\; t\in I:=I,\\
		u(0)-u(1)&=\delta_1 \displaystyle \int_{0}^{1} u(s)ds,\\
		u'(0)-u'(1)&=\delta_2 \displaystyle \int_{0}^{1} u(s)ds,
	\end{aligned}
	\right.
\end{equation}
where $M,\delta_1, \delta_2\in \mathbb{R}$. In particular, we will consider separately the cases $M=0$, $M>0 $ and $M<0$ and we will analyze each of them and give the optimal values on $M,\delta_1, \delta_2\in \mathbb{R}$ for which the Green's function (denoted by $G_{M,\delta_1,\delta_2}$) has constant sign.

The interest of this study relies on the fact that the constant sign of the Green's function is fundamental to ensure the existence of constant sign solutions of related nonlinear problems as it is a basic assumption to apply some classical methods as lower and upper solutions, monotone iterative techniques, Leray-Schauder degree theory or fixed point theorems on cones.

Furthermore, the solvability of differential equation coupled with different types of boundary value conditions is a topic that has awaken interest in the recent literature. In particular integral boundary conditions have been widely considered in many works in the recent literature. For this topic, we refer the reader to \cite{CI,Cabada-Jebari, Hu-Yan,Khanfer,Mansouri, Zhang-Abella-Feng} (for integral boundary conditions in second and fourth order ODEs) or \cite{Ahmad-Hamdan-Alsaedi-Ntouyas,Ahmadkhanlu, Cabada_Hamdi,  Chandran-Gopalan-Tasneem-Abdeljawad, Duraisamy-Nandha-Subramanian,Rezapourfrac} (for fractional equations) and the references therein.

In a recent paper (\cite{CLY}) the authors have proved the existence of a relation between the Green's function of a differential problem coupled with some functional boundary condition (where the functional is given by a linear operator) and the Green's function of the same differential problem coupled with homogeneous boundary conditions. Such formula will be used now to compute the expression of the Green's function related to problem \eqref{eg} for the cases $M>0$ and $M<0$. In such cases, the very well-known properties of the periodic Green's function will help to study the constant sign of the Green's function of problem \eqref{eg}. For the case $M=0$ this technique cannot be applied, as $M=0$ is an eigenvalue of the periodic problem and, consequently, we will need to compute the expression of the Green's function of \eqref{eg} by means of direct integration in this case.

The paper is organized as follows: in Section 2, we compile the preliminary results that will be used later.  In next section we prove some properties of the Green's function which allow us to simplify the study of the general case. Section 4 considers the particular case of considering parameter $\delta_1=0$ in problem \eqref{eg}. Finally, Section 5 includes the complete study of the case $\delta_1\neq 0$, which is related to the study developed in Section 4 by means of the general properties proved in Section 3.

\section{Preliminaries }
In this section we compile the main results of \cite{CLY} that are then used to develop the rest of the paper.

Consider the following  $n$-th order linear boundary value problem with parameter dependence:
\begin{equation}\label{1}
\left\{\begin{split}
T_n\left[M\right]u(t)&=\sigma(t),\quad t\in J:=\left[a,b\right],\\
B_{i}(u)&=\delta_{i}\, C_i(u), \quad i=1,\ldots,n,
\end{split}
\right.
\end{equation}
where $T_n\left[M\right]u(t):=L_{n} u(t)+M\,u(t)$, $t \in J$,
with 
\begin{equation*}
L_{n} u(t):=u^{\left(n\right)}(t)+a_{1}(t) u^{\left(n-1\right)}(t)+\cdots +a_{n}(t) u(t),\quad t\in J.
\end{equation*}
Here   $\sigma$ and $a_{k}$ are continuous functions for all $k=0,\ldots ,n-1$, $M\in \mathbb{R}$ and $\delta_{i}\in \mathbb{R}$ for all $i=1,\ldots,n$. Moreover, $C_i:C(I)\rightarrow \mathbb{R}$ is a linear continuous operator and $B_i$ covers the general two point linear boundary conditions, i.e.:
\begin{equation*}
B_{i}\left(u\right)=\displaystyle \sum_{j=0}^{n-1} \left(\alpha_{j}^{i} u^{\left(j\right)}\left(a\right)+\beta_{j}^{i} u^{\left(j\right)}\left(b\right) \right),\quad i=1,\ldots ,n,
\end{equation*}
being $\alpha_{j}^{i},\, \beta_{j}^{i}$  real constants for all  $i=1,\ldots ,n,\, j=0,\ldots ,n-1$. 

We note that problem \eqref{eg} is a particular case of \eqref{1}.

\begin{lemma}\label{l-ex-green} \cite[Lemma 1]{CLY}
	There exists the unique Green's function related to the homogeneous problem
	\begin{equation}\label{2}
		\left\{
		\begin{aligned}
			T_n\left[M\right] u(t)&=0,\quad t\in J,\\
			B_{i}\left(u\right)&=0,\quad i=1,\ldots n,
		\end{aligned}
		\right.
	\end{equation}
if and only if for any $i\in \{1, \cdots,n\}$, the following problem 
	\begin{equation}\label{e-wj}
		\left\{
		\begin{aligned}
			T_n\left[M\right] u(t)&=0,\quad t\in J,\\
			B_{j}\left(u\right)&=0,\quad j\neq i,\\
			B_{i}\left(u\right)&=1,
		\end{aligned}
		\right.
	\end{equation}
	has a unique solution, that we denote as $\omega_{i}(t)$, $t\in J$.
\end{lemma}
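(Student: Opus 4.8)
The plan is to reduce both sides of the claimed equivalence to a single linear-algebraic condition on an $n\times n$ matrix built from a fundamental system of solutions. First I would fix a fundamental system $\{u_1,\dots,u_n\}$ of the homogeneous equation $T_n[M]u=0$ on $J$; by the standard existence--uniqueness theory for linear ODEs with continuous coefficients, the solution space is exactly $n$-dimensional, so such a system exists and every solution of $T_n[M]u=0$ is a unique linear combination $\sum_{k=1}^n c_k u_k$. I then introduce the boundary matrix $A:=\bigl(B_i(u_k)\bigr)_{i,k=1}^n$, which records how the two-point functionals $B_i$ act on the chosen basis.

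Next I would translate each problem into a system of the form $Ac=(\text{data})$. For problem \eqref{e-wj} with the index $i$ fixed, any candidate solution solves the homogeneous ODE, hence equals $\sum_k c_k u_k$, and the conditions $B_j(u)=\delta_{ij}$ amount to $Ac=e_i$, where $e_i$ is the $i$-th canonical vector. Thus \eqref{e-wj} has a unique solution if and only if $Ac=e_i$ is uniquely solvable; since $A$ is square, unique solvability for even a single right-hand side forces $\ker A=\{0\}$, i.e.\ $\det A\neq0$, and conversely $\det A\neq 0$ makes $Ac=e_i$ uniquely solvable for every $i$. Hence the condition ``for all $i$, \eqref{e-wj} is uniquely solvable'' is equivalent to $\det A\neq 0$ (and is in fact already equivalent to the same statement for one single $i$).

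For the Green's function side, I would recall that the Green's function of \eqref{2} exists and is unique precisely when the nonhomogeneous boundary value problem $T_n[M]u=\sigma$, $B_i(u)=0$, $i=1,\dots,n$, has a unique solution for every continuous $\sigma$. Writing the general solution as $u_p+\sum_k c_k u_k$ for a particular solution $u_p$ (obtained, e.g., by variation of parameters), the homogeneous boundary conditions become $Ac=-\bigl(B_i(u_p)\bigr)_{i=1}^n$; this is uniquely solvable for every $\sigma$ exactly when $\det A\neq0$, which is in turn the condition that the homogeneous problem \eqref{2} admits only the trivial solution. Chaining the two equivalences through the common condition $\det A\neq0$ then yields the lemma.

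The main obstacle I anticipate is not the linear algebra but making rigorous the claim that existence of the Green's function is equivalent to unique solvability of the nonhomogeneous problem for every $\sigma$: one must construct the Green's function explicitly---as a piecewise expression assembled from the $u_k$ and their Wronskian and corrected by the boundary data---and verify both that $u(t)=\int_J G(t,s)\,\sigma(s)\,ds$ reproduces the unique solution and that uniqueness of $G$ follows from $\det A\neq0$. Once that construction is available, both directions of the equivalence pass through $\det A\neq0$ and the conclusion is immediate.
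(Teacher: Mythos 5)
The paper states this lemma without proof, quoting it verbatim as Lemma 1 of \cite{CLY}, so there is no in-paper argument to compare against; your proposal supplies the standard proof and it is correct. Reducing both sides of the equivalence to the nondegeneracy condition $\det A\neq 0$ for the boundary matrix $A=\bigl(B_i(u_k)\bigr)$ built on a fundamental system is exactly the right route: problem \eqref{e-wj} becomes $Ac=e_i$, the Green's function of \eqref{2} exists and is unique precisely when the fully homogeneous problem admits only the trivial solution (equivalently $Ac=0$ forces $c=0$), and both conditions amount to invertibility of the square matrix $A$. You correctly identify where the real work lies, namely in the construction of $G$ by variation of parameters --- the Cauchy function for $t\ge s$ corrected by $\sum_k c_k(s)u_k(t)$ with $c(s)$ determined from $Ac(s)=-\bigl(B_i(\text{Cauchy part})\bigr)_i$ --- together with the observation that any two Green's functions differ, for each fixed $s$, by a solution of \eqref{2} and hence coincide when $\det A\neq 0$. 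One refinement your argument yields for free: since unique solvability of $Ac=e_i$ for a \emph{single} index $i$ already forces $\det A\neq 0$, the hypothesis ``for any $i$'' in the statement could be weakened to ``for some $i$''.
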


The following result shows the existence and uniqueness of the solution of problem \eqref{1} and it is a direct consequence of \cite[Theorem 2]{CLY}.

\begin{theorem}\cite[Corollary 2]{CLY}
	Assume that the homogeneous problem \eqref{2} has $u=0$ as its unique solution and let $G_{M,0,\dots,0}$ be its unique Green's function. 
	Let $\sigma \in C(J)$, and $\delta_{i},$  $i=1,\dots, n$, be such that $\sum_{i=1}^{n} \delta_{i} \, C(\omega_{i})\neq 1$. Then problem \eqref{1}
	has a unique solution $u\in C^n(J)$, given by the expression
	\begin{equation*}
	u(t)=\displaystyle \int_{a}^{b} G_{M,\delta_1,\dots,\delta_n}(t,s) \, \sigma(s) ds,
	\end{equation*}
	where 
	\begin{equation}\label{14}
	G_{M,\delta_1,\dots,\delta_n}(t,s):=G_{M,0,\dots,0}(t,s)+\dfrac{\displaystyle \sum_{i=1}^{n} \delta_{i} \, \omega_{i}(t)}{1-\displaystyle \sum_{j=1}^{n} \delta_{j} \, C(\omega_{j})} \, C(G_{M,0,\dots,0}(\cdot,s)).
	\end{equation}
\end{theorem}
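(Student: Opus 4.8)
The plan is to verify directly that the function built from the right-hand side of \eqref{14} solves \eqref{1}, rather than unwinding the full apparatus of \cite{CLY}. Since the homogeneous problem \eqref{2} admits only the trivial solution, Lemma~\ref{l-ex-green} guarantees that $G_{M,0,\dots,0}$ exists and that each auxiliary problem \eqref{e-wj} has a unique solution $\omega_i$. I would then set
\begin{equation*}
v(t):=\int_a^b G_{M,0,\dots,0}(t,s)\,\sigma(s)\,ds,
\end{equation*}
which by the defining property of $G_{M,0,\dots,0}$ solves $T_n[M]v=\sigma$ together with $B_i(v)=0$ for every $i$. Using that $C$ is linear and continuous, hence commutes with the integral (approximate it by Riemann sums converging uniformly on $J$), I would rewrite the candidate $u(t)=\int_a^b G_{M,\delta_1,\dots,\delta_n}(t,s)\,\sigma(s)\,ds$ as
\begin{equation*}
u(t)=v(t)+\lambda\sum_{i=1}^n \delta_i\,\omega_i(t),\qquad \lambda:=\frac{C(v)}{1-\sum_{j=1}^n \delta_j\,C(\omega_j)},
\end{equation*}
where the denominator is nonzero precisely by the hypothesis $\sum_{i=1}^n\delta_i\,C(\omega_i)\neq 1$.

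Next I would check that $u$ solves \eqref{1}. The differential equation is immediate: since $T_n[M]v=\sigma$ and $T_n[M]\omega_i=0$, linearity of $T_n[M]$ gives $T_n[M]u=\sigma$. For the boundary conditions, linearity of each $B_k$ together with $B_k(v)=0$ and the fact that, by \eqref{e-wj}, $B_k(\omega_i)$ equals $1$ when $k=i$ and $0$ otherwise, yields
\begin{equation*}
B_k(u)=\lambda\sum_{i=1}^n \delta_i\,B_k(\omega_i)=\lambda\,\delta_k,\qquad k=1,\dots,n.
\end{equation*}
Hence it remains only to establish $\lambda=C(u)$, for then $B_k(u)=\delta_k\,C(u)$ as required. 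Writing $S:=\sum_{j=1}^n \delta_j\,C(\omega_j)$ and applying $C$ to the decomposition of $u$,
\begin{equation*}
C(u)=C(v)+\lambda S=\frac{C(v)(1-S)+C(v)S}{1-S}=\frac{C(v)}{1-S}=\lambda,
\end{equation*}
so $u$ satisfies every boundary condition and is therefore a solution of \eqref{1}.

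For uniqueness I would take the difference $w$ of two solutions; then $T_n[M]w=0$ and $B_k(w)=\delta_k\,C(w)$ for all $k$. Because \eqref{2} has only the trivial solution, the linear map sending a solution of $T_n[M]w=0$ to the vector $(B_1(w),\dots,B_n(w))\in\R^n$ is injective, hence bijective between two $n$-dimensional spaces; prescribing $B_k(w)=\delta_k\,C(w)$ therefore forces $w=C(w)\sum_{i=1}^n\delta_i\,\omega_i$. Applying $C$ and setting $c:=C(w)$ gives $c=cS$, i.e.\ $c(1-S)=0$, and since $S\neq 1$ we conclude $c=0$ and thus $w=0$.

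I expect the main subtlety to be the self-consistent handling of the boundary conditions: the unknown $C(u)$ appears on the right-hand side of $B_k(u)=\delta_k\,C(u)$, and the construction hinges on the identity $C(u)=\lambda$, which is exactly what forces the denominator $1-S$ into \eqref{14}. The condition $S\neq1$ is used twice — to make the formula well defined and to close the uniqueness argument — so I would flag both uses. The commutation of $C$ with the integral, while routine, is the only analytic (rather than algebraic) point and merits a one-line justification from the continuity of $C$.
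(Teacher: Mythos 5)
Your argument is correct and complete. Note that the paper itself offers no proof of this statement: it is quoted verbatim from \cite[Corollary 2]{CLY} and used as a black box, so there is no in-paper argument to compare against. Your direct verification is the natural one and is exactly what such a proof must contain: the decomposition $u=v+\lambda\sum_i\delta_i\omega_i$, the commutation of the continuous linear functional $C$ with the $s$-integral, the self-consistency identity $C(u)=\lambda$ (which is where the denominator $1-S$ is forced), and the uniqueness argument via the bijectivity of $w\mapsto(B_1(w),\dots,B_n(w))$ on the $n$-dimensional kernel of $T_n[M]$, closing with $c(1-S)=0$. Both uses of the hypothesis $S\neq 1$ are correctly isolated. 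One cosmetic remark: problem \eqref{1} is stated with operators $C_i$ depending on $i$, while the theorem (and your proof) uses a single $C$; this inconsistency is already present in the paper's statement, and in the application to \eqref{eg} both functionals coincide, so your reading is the intended one.
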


\section{First results}

This section is devoted to deduce some preliminary results that will be fundamental in the development of the paper. In a first moment, we deduce the following symmetric property.
\begin{lemma}
	\label{G-symmetric}
	Assume that problem~\eqref{eg} has a unique solution and let $G_{M,\delta_1,\delta_2}$ be its related Green's function. Then the following symmetric property holds:	
	\begin{equation}\label{dadah}
		G_{M,\delta_1,\delta_2}(t,s)=G_{M,-\delta_1,\delta_2}(1-t,1-s).
	\end{equation} 
\end{lemma}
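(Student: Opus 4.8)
The plan is to exploit the reflection $t\mapsto 1-t$, which maps the interval $I=[0,1]$ onto itself, and to track how it transforms problem \eqref{eg}. Concretely, I would fix $\sigma\in C(I)$, let $u$ be the unique solution of \eqref{eg} associated with the parameters $(M,\delta_1,\delta_2)$ and the datum $\sigma$, and introduce $v(t):=u(1-t)$. A direct differentiation gives $v''(t)=u''(1-t)$, so $v$ satisfies the same differential equation with the reflected datum $\tilde\sigma(t):=\sigma(1-t)$, that is, $v''(t)+Mv(t)=\tilde\sigma(t)$.

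The heart of the argument is to verify that $v$ satisfies the boundary conditions of \eqref{eg} but with $\delta_1$ replaced by $-\delta_1$ and $\delta_2$ unchanged. For this I would use $u(0)=v(1)$, $u(1)=v(0)$, $u'(0)=-v'(1)$, $u'(1)=-v'(0)$, together with the invariance of the integral mean $\int_0^1 u(s)\,ds=\int_0^1 v(s)\,ds$ under the substitution $s\mapsto 1-s$. The first condition $u(0)-u(1)=\delta_1\int_0^1 u$ then becomes $v(1)-v(0)=\delta_1\int_0^1 v$, i.e. $v(0)-v(1)=-\delta_1\int_0^1 v$, producing the sign change in the first parameter; the second condition $u'(0)-u'(1)=\delta_2\int_0^1 u$ becomes $v'(0)-v'(1)=\delta_2\int_0^1 v$, leaving $\delta_2$ untouched, because the minus sign introduced by differentiation is compensated by the interchange of the two endpoints. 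Thus $v$ solves \eqref{eg} with parameters $(M,-\delta_1,\delta_2)$ and datum $\tilde\sigma$. Before invoking the corresponding Green's function, I would note that the map $u\mapsto u(1-\cdot)$ is an involutive bijection between the solution sets of the two problems, so the assumed uniqueness for $(M,\delta_1,\delta_2)$ guarantees uniqueness for $(M,-\delta_1,\delta_2)$ and hence the existence of $G_{M,-\delta_1,\delta_2}$.

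Finally, I would compare the two integral representations of $v$. On one hand, $v(t)=u(1-t)=\int_0^1 G_{M,\delta_1,\delta_2}(1-t,s)\,\sigma(s)\,ds$; on the other, using the Green's function of the reflected problem and the substitution $s\mapsto 1-s$,
\[
v(t)=\int_0^1 G_{M,-\delta_1,\delta_2}(t,s)\,\sigma(1-s)\,ds=\int_0^1 G_{M,-\delta_1,\delta_2}(t,1-s)\,\sigma(s)\,ds.
\]
Since $\sigma\in C(I)$ is arbitrary, the two kernels must coincide, giving $G_{M,\delta_1,\delta_2}(1-t,s)=G_{M,-\delta_1,\delta_2}(t,1-s)$ for all $t,s\in I$; replacing $t$ by $1-t$ then yields exactly \eqref{dadah}. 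The only delicate point is the bookkeeping of signs in the boundary conditions, as an error there would spoil the matching of parameters $\delta_1\mapsto-\delta_1$ and $\delta_2\mapsto\delta_2$; the rest is a routine change of variables.
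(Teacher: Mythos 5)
Your proposal is correct and follows essentially the same route as the paper: reflect via $v(t)=u(1-t)$, check that $v$ solves the problem with parameters $(M,-\delta_1,\delta_2)$ and datum $\sigma(1-\cdot)$, and identify the two integral representations of $v$ after the change of variables $s\mapsto 1-s$. The only difference is that you spell out the sign bookkeeping in the boundary conditions and the uniqueness transfer, which the paper leaves as ``immediate to verify.''
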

\begin{proof}
	Let 
	\[u(t)=\int_0^1{G_{M,\delta_1,\delta_2}(t,s)\, \sigma(s)\, ds}\]
	be the unique solution of problem~\eqref{eg}.
	
	It is immediate to verify that $v(t):=u(1-t)$ is the unique solution of the following problem:
	\begin{equation*}
		\left\{
		\begin{split}
			v''(t)+M v(t)&=\sigma\left(1-t\right), \quad t\in I,\\
			v(0)-v(1)&=-\delta_1 \displaystyle \int_{0}^{1} v(s)ds,\\
			v'(0)-v'(1)&=\delta_2 \displaystyle \int_{0}^{1} v(s)ds.
		\end{split}
		\right.
	\end{equation*}
As a direct consequence, we deduce that
	\[v(t)=\int_0^1{G_{M,-\delta_1,\delta_2}(t,s)\, \sigma(1-s)\, ds}.\]
	
	On the other hand, we have
	\begin{equation*}
		v(t)=u(1-t)=\int_0^1{G_{M,\delta_1,\delta_2}(1-t,s)\, \sigma(s)\, ds}= \int_0^1{G_{M,\delta_1,\delta_2}(1-t,1-s)\, \sigma(1-s)\, ds}.
	\end{equation*}
	Therefore, the equality \eqref{dadah} is fulfilled directly by identifying the two previous equalities.
\end{proof}

Let us now characterize the points where a constant sign Green's function may vanish.
\begin{lemma}\label{L:cs_van_gen}
Let $M<\pi^2$. If $G_{M,\delta_1,\delta_2}$ has constant sign on $I\times I$ and vanishes at some point $(t_0,s_0)$, then either $t_0=0$, $t_0=1$ or $t_0=s_0$.
\end{lemma}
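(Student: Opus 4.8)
The plan is to fix the second variable and argue on the one–dimensional slice. Write $s_0\in(0,1)$ (the degenerate cases $s_0\in\{0,1\}$, where the slice solves the homogeneous equation on all of $I$ with no interior jump, are handled in the same way but are simpler) and set $g(t):=G_{M,\delta_1,\delta_2}(t,s_0)$. Then $g$ solves $g''+Mg=0$ on each of $(0,s_0)$ and $(s_0,1)$, is continuous at $s_0$, and carries the unit jump $g'(s_0^+)-g'(s_0^-)=1$ coming from the definition of the Green's function. Using the reflection identity \eqref{dadah} of Lemma~\ref{G-symmetric}, a zero of $G_{M,\delta_1,\delta_2}$ at $(t_0,s_0)$ with $t_0\in(s_0,1)$ corresponds to a zero of $G_{M,-\delta_1,\delta_2}$ at $(1-t_0,1-s_0)$ with $1-t_0\in(0,1-s_0)$, and constant sign is preserved; hence I may assume from the start that $t_0\in(0,s_0)$.

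Next I would exploit the constant sign together with the ODE. Since $g$ is sign-definite and vanishes at the interior point $t_0\in(0,s_0)$, that point is a local extremum of the smooth function $g|_{[0,s_0]}$, so $g'(t_0)=0$; as $g$ solves the second order linear equation $g''+Mg=0$ on $(0,s_0)$, uniqueness for the initial value problem forces $g\equiv 0$ on $[0,s_0]$. This is the point where the hypothesis $M<\pi^2$ is used to keep the geometry under control: a nontrivial solution of $u''+Mu=0$ has consecutive zeros at distance $\pi/\sqrt{M}>1$ when $M>0$ (and at most one zero at all when $M\le 0$), so on an interval of length $<1$ it has at most one zero; in particular, continuing $g$ across $s_0$ by $g(s_0)=0$ and $g'(s_0^+)=1$ (from the jump), the solution $g$ has no further zero on $(s_0,1]$ and keeps a strict sign there. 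Thus the whole difficulty is reduced to excluding the possibility that $g$ vanishes identically on the subinterval $[0,s_0]$.

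The heart of the proof, and the step I expect to be the main obstacle, is exactly this exclusion, and its difficulty depends on the sign. If the constant sign of $G$ is nonpositive one is done immediately: the jump gives $g'(s_0^+)=1>0$, so $g>0$ just to the right of $s_0$, contradicting $g\le 0$. The genuinely delicate case is $G\ge 0$, where the slice $g$ (zero on $[0,s_0]$, positive on $(s_0,1]$) is perfectly compatible with sign-definiteness of the single slice, so no one-dimensional contradiction is available and the global hypothesis must be used. Here I would argue as follows: because $t_0\ne s_0$, $G$ is jointly smooth near $(t_0,s_0)$ and attains there its global minimum $0$, whence $\partial_sG(t_0,s_0)=0$ as well; since the Green's function satisfies the (self-)adjoint equation $\partial_s^2G+MG=0$ in the second variable off the diagonal, the slice $s\mapsto G(t_0,s)$ has a double zero at $s_0$ and therefore vanishes on a whole neighbourhood of $s_0$. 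Spreading this zero set by analyticity of the constant-coefficient solutions and using the jump along the diagonal, one is driven to $G\equiv 0$ on the entire triangle $\{t<s\}$, so that $u(t)=\int_0^tG(t,s)\,\sigma(s)\,ds$ would depend only on $\sigma|_{[0,t]}$; such a Volterra (causal) solution operator is incompatible with the two-point, non-local boundary conditions of \eqref{eg}, giving the required contradiction. A cleaner alternative for this last step, which is likely the route actually taken, is to invoke the explicit representation \eqref{14}: using $\int_0^1G_{M,0,0}(\tau,s)\,d\tau\equiv 1/M$ one gets the decomposition $G_{M,\delta_1,\delta_2}(t,s)=G_{M,0,0}(t,s)+h(t)$ with $h''+Mh=0$, and then the strict sign of the periodic Green's function $G_{M,0,0}$ for $M<\pi^2$ lets one compare two nearby slices and exhibit a sign change directly, contradicting the assumed constant sign.
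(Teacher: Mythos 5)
Your proposal tracks the paper's argument up to the decisive point: reduction to one triangle via the symmetry of Lemma~\ref{G-symmetric}, the observation that an off-diagonal interior zero of a sign-definite slice is a double zero, and the conclusion by uniqueness for the initial value problem that the slice then vanishes identically on the whole smooth piece between the diagonal and the nearer endpoint (with $M<\pi^2$ entering through the Sturm bound on the spacing of zeros). The paper essentially stops there, reading the identically vanishing slice as already contradicting Sturm's comparison theorem. You instead observe---correctly---that in the nonnegative case a slice which is zero on $[0,s_0]$ and positive on $(s_0,1]$ is not by itself contradictory, and you try to supply the missing global argument. That is where your proof breaks down.

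The identity you rely on, $\partial_s^2G_{M,\delta_1,\delta_2}+M\,G_{M,\delta_1,\delta_2}=0$ off the diagonal, is false whenever $(\delta_1,\delta_2)\neq(0,0)$. Indeed, by \eqref{70_bis} the difference $G_{M,\delta_1,\delta_2}(t,s)-G_{M,0,0}(t,s)=\bigl(\delta_1\,\omega_1(t)+\delta_2\,\omega_2(t)\bigr)/(M-\delta_2)$ is independent of $s$ (precisely because $\int_0^1G_{M,0,0}(t,s)\,dt=1/M$ does not depend on $s$), so $\partial_s^2G_{M,\delta_1,\delta_2}+MG_{M,\delta_1,\delta_2}=M\bigl(\delta_1\omega_1(t)+\delta_2\omega_2(t)\bigr)/(M-\delta_2)\not\equiv 0$; the same failure is visible in \eqref{e-G-0} for $M=0$. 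The structural reason is that the adjoint of a problem with integral boundary conditions is not a homogeneous two-point problem for the same equation: the functional $\int_0^1u$ contributes a source term in the $s$-variable. Hence a double zero of $s\mapsto G(t_0,s)$ does not force it to vanish on a neighbourhood, and the whole chain ``$G\equiv 0$ on $\{t<s\}$, Volterra operator, contradiction with the non-local conditions'' collapses. Your sketched fallback---write $G_{M,\delta_1,\delta_2}(t,s)=G_{M,0,0}(t,s)+h(t)$ and use the profile of $s\mapsto G_{M,0,0}(t_0,s)$, whose minimum over $s\in I$ is attained only at $s=t_0$---is sound in spirit and could be completed for $M\neq 0$, but as written it is only an assertion, it does not cover $M=0$, and it is not the argument the paper gives.
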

\begin{proof}
Let us suppose that $(t_0,s_0)\in (0,1)\times [0,1)$, with $t_0> s_0$. In such a case, $u(t)=G_{M,\delta_1,\delta_2}(t,s_0)$ solves the problem
	\begin{equation*}\left\{\begin{split}
	&u''(t)+M\,u(t)=0, \quad a.\,e. \ t\in (s_0,1], \\
	&u(t_0)=u'(t_0)=0,
	\end{split}\right.\end{equation*}
	and so $G_{M,\delta_1,\delta_2}(t,s_0)=0$ for all $t\in (s_0,1]$. This is a contradiction with Sturm's comparison results, \cite{MW}, as for $M<\pi^2$ the distance between two consecutive zeros of any solution of the equation $u''(t)+M\,u(t)=0$ must be bigger than $1$.
	
	We note that the case $(t_0,s_0)\in (0,1)\times (0,1]$, with $t_0< s_0$ can also be discarded as Lemma~\ref{G-symmetric} implies that if $G_{M,\delta_1,\delta_2}$ has constant sign and vanishes at $(t_0,s_0)\in (0,1)\times (0,1]$, with $t_0< s_0$, then  $G_{M,\delta_1,\delta_2}$ will also have constant sign and vanish at the point $(1-t_0,1-s_0)$ (which satisfies that $1-t_0>1-s_0$).
\end{proof}

For $M\in \mathbb{R}\setminus\{0\}$, according to \eqref{14}, the Green's function of problem \eqref{eg} is
\begin{equation}\label{70}\begin{split}
G_{M,\delta_1,\delta_2}(t,s)&=G_{M,0,0}(t,s)+\dfrac{\delta_1 \, \omega_{1}(t)+ \delta_2\, \omega_2(t)}{1-\left( \delta_1 \int_{0}^{1} \omega_1(s)\, ds + \delta_2 \int_{0}^{1} \omega_2(s)\, ds \right)} \, \int_{0}^{1} G_{M,0,0}(t,s) \, dt ,
\end{split}\end{equation}
where  $\omega_{1}$ is the unique solution to the problem 
\begin{equation*}
	\left\{
	\begin{aligned}
		u''(t)+M u(t)&=0,\;\; t\in I,\\
		u(0)-u(1)&= 1,\\
		u'(0)-u'(1)&=0,
	\end{aligned}
	\right.
\end{equation*}
and  $\omega_{2}$ is the unique solution to the problem 
\begin{equation*}
	\left\{
	\begin{aligned}
		u''(t)+M u(t)&=0,\;\; t\in I,\\
		u(0)-u(1)&= 0,\\
		u'(0)-u'(1)&=1.
	\end{aligned}
	\right.
\end{equation*}
It is immediate  to see that and $\omega_{1}(t)=\omega_{2}'(t)$, for all $t\in I$. As a consequence $\int_{0}^{1} \omega_1(s)\, ds =0$. Moreover, it is very well known (see \cite{C1,C2}) that $\omega_{2}(t)=G_{M,0,0}(t,0)$ and
\begin{equation*}
G_{M,0,0}(t,s)=	\left\{
	\begin{array}{ll}
		G_{M,0,0}(t-s,0), & 0\le s \le t \le 1,\\ \\
G_{M,0,0}(1+t-s,0),& 0\le t<s \le 1.
	\end{array}
	\right.
\end{equation*}

Thus, it holds that 
\[\int_{0}^{1} G_{M,0,0}(t,s) dt= \int_{0}^{1} G_{M,0,0}(t,0) dt= \int_{0}^{1} \omega_2(t) dt = -\frac{1}{M} \int_{0}^{1} \omega_2''(t) dt = \frac{1}{M}, \quad \forall\, s\in I. \]
As a consequence, \eqref{70} can be rewritten as
\begin{equation}\label{70_bis}\begin{split}
		G_{M,\delta_1,\delta_2}(t,s)&=G_{M,0,0}(t,s)+\dfrac{\delta_1 \, \omega_{1}(t)+ \delta_2\, \omega_2(t)}{M-\delta_2} = G_{M,0,\delta_2}(t,s) +\dfrac{\delta_1 \, \omega_{1}(t)}{M-\delta_2}.
\end{split}\end{equation}

Taking into account previous expression, we will start with the study of the case $\delta_1=0$.

\section{Study of case $\delta_1=0$} 
In this section we will study the regions of constant sign of the Green's function related to the following perturbed periodic problem
\begin{equation}\label{fous}
\left\{
\begin{aligned}
u''(t)+M u(t)&=\sigma(t),\;\; t\in I,\\
u(0)-u(1)&=0,\\
u'(0)-u'(1)&=\delta_2 \displaystyle \int_{0}^{1} u(s)ds,
\end{aligned}
\right.
\end{equation}
for $M, \delta_2 \in \mathbb{R}$.

It is immediate to verify that  the spectrum of problem \eqref{fous} is given by
\[(\delta_2,M) \in \left\{\ (4k^2 \pi^2, \delta_2), \ \delta_2\in \mathbb{R}, \ k=1,2,\dots\right\} \cup \left\{(M,M), \ M\in \mathbb{R} \right\}.\]
 
On the other hand, the spectrum of the homogeneous periodic problem ($\delta_1=\delta_2=0$)
 \begin{equation}\label{nons}
	\left\{
	\begin{aligned}
		u''(t)+M u(t)&=\sigma(t),\;\; t\in I,\\
		u(0)-u(1)&=0,\\
		u'(0)-u'(1)&=0,
	\end{aligned}
	\right.
\end{equation} 
is given by $4k^2 \pi^2$, $k=0,1,2\ldots$, that is, $G_{M,0,0}$ exists and is unique if and only if $M\neq 4k^2 \pi^2$, $k=0,1,2\ldots$.

Thus, formula \eqref{70} is valid to compute $G_{M,0,\delta_2}$ for all $M\neq 4 k^2 \pi^2,\;\; k=0,1,\ldots$ and $\delta_2 \neq M$. The Green's function $G_{0,0,\delta_2}$, with $\delta_2\neq 0$, exists but it can not be calculated using $\eqref{70}$, so we will do it by direct integration.

Let us now characterize the points where a constant sign Green's function may vanish.
 \begin{lemma}\label{L:nonneg_van}
 	Let $M<\pi^2$. If $\delta_2<0$, $G_{M,0,\delta_2}$ is non-negative on $I\times I$ and vanishes at some point $(t_0,s_0) \in I \times I$, then $t_0=s_0$.
 \end{lemma}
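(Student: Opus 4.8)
The plan is to invoke the general localization of zeros from Lemma~\ref{L:cs_van_gen} and then rule out the two boundary cases by exploiting the non-local boundary condition together with the sign restriction $\delta_2<0$. Indeed, applying Lemma~\ref{L:cs_van_gen} with $\delta_1=0$ to the non-negative function $G_{M,0,\delta_2}$, which vanishes at $(t_0,s_0)$, one obtains at once that either $t_0\in\{0,1\}$ or $t_0=s_0$. Thus it only remains to discard the possibilities $t_0=0$ and $t_0=1$.

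First I would reduce these two cases to a single one. Since $\delta_1=0$, the first boundary condition in \eqref{fous} forces $G_{M,0,\delta_2}(0,s)=G_{M,0,\delta_2}(1,s)$ for every $s\in I$, so $G_{M,0,\delta_2}$ vanishes at $(0,s_0)$ if and only if it vanishes at $(1,s_0)$. Hence, in either boundary case, the function $u(t):=G_{M,0,\delta_2}(t,s_0)$ satisfies $u(0)=u(1)=0$, while $u\geq 0$ on $I$ by hypothesis. If $s_0\in\{0,1\}$ there is nothing to prove (the conclusion $t_0=s_0$ is then immediate from the dichotomy above), so I may assume $s_0\in(0,1)$; this guarantees that $u$ is smooth near each endpoint and that the one-sided derivatives $u'(0)$ and $u'(1)$ are well defined.

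The contradiction then comes from comparing two expressions for $u'(0)-u'(1)$. On the one hand, since $u\geq 0$ attains the value $0$ at both endpoints, $t=0$ and $t=1$ are global minima, whence $u'(0)\geq 0\geq u'(1)$ and therefore $u'(0)-u'(1)\geq 0$. On the other hand, $u=G_{M,0,\delta_2}(\cdot,s_0)$ inherits the functional boundary condition of \eqref{fous} in its first variable, that is, $u'(0)-u'(1)=\delta_2\int_0^1 u(\tau)\,d\tau$; as $\delta_2<0$ and $u\geq 0$, the right-hand side is $\leq 0$. Combining both inequalities yields $u'(0)-u'(1)=0$ and, since $\delta_2\neq 0$, $\int_0^1 u(\tau)\,d\tau=0$. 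Being continuous and non-negative, this forces $u\equiv 0$, which is impossible because the Green's function has a unit jump $\partial_t u(s_0^+)-\partial_t u(s_0^-)=1$ at $t=s_0$. Hence $t_0\notin\{0,1\}$ and consequently $t_0=s_0$.

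The main obstacle I anticipate is purely technical: justifying rigorously the one-sided derivative inequalities at the endpoints in the presence of the derivative jump of $G_{M,0,\delta_2}$ at $t=s$ (handled by restricting to $s_0\in(0,1)$, the remaining cases being trivial), and checking carefully that the Green's function satisfies the functional boundary condition in $t$. The latter follows from the representation $u(t)=\int_0^1 G_{M,0,\delta_2}(t,s)\,\sigma(s)\,ds$ by equating the boundary operators applied to both sides and using the arbitrariness of $\sigma$. This is where the hypothesis $\delta_2<0$ enters decisively, as the sign of $\delta_2$ is exactly what makes the two estimates for $u'(0)-u'(1)$ incompatible unless $u\equiv 0$.
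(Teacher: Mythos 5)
Your argument is correct and follows essentially the same route as the paper: both reduce to the boundary cases via Lemma~\ref{L:cs_van_gen} and the identity $G_{M,0,\delta_2}(0,s)=G_{M,0,\delta_2}(1,s)$, and both reach a contradiction by comparing the one-sided derivative inequalities $\frac{\partial\,G_{M,0,\delta_2}}{\partial t}(0,s_0)\ge 0\ge \frac{\partial\,G_{M,0,\delta_2}}{\partial t}(1,s_0)$ forced by non-negativity with the sign of $u'(0)-u'(1)$ dictated by the second boundary condition. The only (harmless) difference is that the paper quotes the explicit value $\frac{\partial\,G_{M,0,\delta_2}}{\partial t}(0,s)-\frac{\partial\,G_{M,0,\delta_2}}{\partial t}(1,s)=\delta_2/(M-\delta_2)<0$, whereas you only use $\delta_2\int_0^1 u\le 0$ and dispose of the equality case $\int_0^1 u=0$ via the unit jump of the Green's function, which in fact avoids having to check that $M-\delta_2>0$.
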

 \begin{proof}
From Lemma~\ref{L:cs_van_gen} we only need to discard the cases $(0,s_0)$ and $(1,s_0)$ with $s_0\in (0,1)$. We note that, since $G_{M,0,\delta_2}(0,s_0)=G_{M,0,\delta_2}(1,s_0)$, both cases are equivalent. Suppose then that
 	\[G_{M,0,\delta_2}(0,s_0)=G_{M,0,\delta_2}(1,s_0)=0. \]
 	In such a case, it would occur that $\frac{\partial\, G_{M,0,\delta_2}}{\partial\,t} (0,s_0)\ge 0$ and $\frac{\partial\, G_{M,0,\delta_2}}{\partial\,t} (1,s_0)\le 0$, which contradicts the fact that
 	\[\frac{\partial\, G_{M,0,\delta_2}}{\partial\,t} (0,s) - \frac{\partial\, G_{M,0,\delta_2}}{\partial\,t} (1,s)=\frac{\delta_2}{M-\delta_2}<0 \quad \forall\,s\in (0,1). \] 
 	
 	As a consequence, the only possibility is that $t_0=s_0$.
 \end{proof}

 \begin{lemma}\label{L:nonpos_van}
 	Let $M<\pi^2$. If $\delta_2>M$,  $G_{M,0,\delta_2}$ is non-positive on $I\times I$ and vanishes at some point $(t_0,s_0) \in I \times I$, then either  $t_0=0$ or $t_0=1$.	
 \end{lemma}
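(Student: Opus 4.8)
The plan is to follow the same scheme as in Lemma~\ref{L:nonneg_van}: use Lemma~\ref{L:cs_van_gen} to confine any zero of a constant sign Green's function to the three locations $t_0=0$, $t_0=1$ or $t_0=s_0$, and then discard the location incompatible with the prescribed sign. Since here $G_{M,0,\delta_2}$ is non-positive and the target conclusion is $t_0\in\{0,1\}$, the case to be excluded is the diagonal $t_0=s_0$ with $t_0=s_0\in(0,1)$ (if $t_0=s_0\in\{0,1\}$ the conclusion already holds). Thus, as $M<\pi^2$, Lemma~\ref{L:cs_van_gen} reduces the whole proof to ruling out an interior diagonal zero.

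So suppose $t_0=s_0\in(0,1)$ and consider the section $u(t):=G_{M,0,\delta_2}(t,s_0)$, which is of class $C^1$ on $[0,s_0]$ and on $[s_0,1]$. Because $G_{M,0,\delta_2}\le 0$ and $u(s_0)=0$, the point $s_0$ is an interior maximum of $u$; comparing $u$ with this maximum on each side yields the one-sided estimates $\frac{\partial G_{M,0,\delta_2}}{\partial t}(s_0^-,s_0)\ge 0$ and $\frac{\partial G_{M,0,\delta_2}}{\partial t}(s_0^+,s_0)\le 0$, and hence
\[\frac{\partial G_{M,0,\delta_2}}{\partial t}(s_0^+,s_0)-\frac{\partial G_{M,0,\delta_2}}{\partial t}(s_0^-,s_0)\le 0.\]
The key step is to contradict this inequality through the jump of the Green's function across the diagonal. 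This jump equals $1$: it is the standard normalization for the Green's function of a second order operator whose leading coefficient is $1$, and for $M\neq 0$ it can be read off from~\eqref{70_bis}, since the perturbation $\frac{\delta_2\,\omega_2(t)}{M-\delta_2}$ solves $u''+Mu=0$, is therefore smooth, and adds nothing to the jump carried by $G_{M,0,0}$. Consequently
\[\frac{\partial G_{M,0,\delta_2}}{\partial t}(s_0^+,s_0)-\frac{\partial G_{M,0,\delta_2}}{\partial t}(s_0^-,s_0)=1,\]
which contradicts the previous inequality. Therefore no interior diagonal zero exists and every zero must satisfy $t_0=0$ or $t_0=1$.

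The part requiring care is the jump relation: one must fix both its size and, crucially, its sign $+1$ under the convention $u(t)=\int_0^1 G(t,s)\,\sigma(s)\,ds$, and keep $s_0$ strictly interior so that the two one-sided derivatives exist. It is worth noting that this is exactly the mirror image of Lemma~\ref{L:nonneg_van}: there the diagonal is a minimum of a non-negative function and the one-sided derivatives force the jump to be $\ge 0$, which is consistent with the value $+1$, so the diagonal survives; here the diagonal is a maximum of a non-positive function, forcing the jump to be $\le 0$, which the value $+1$ violates, so the diagonal is expelled and only the endpoints remain.
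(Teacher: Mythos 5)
Your proof is correct and follows essentially the same route as the paper: reduce via Lemma~\ref{L:cs_van_gen} to the diagonal case, note that a non-positive section attaining zero at an interior diagonal point forces the jump of $\frac{\partial G_{M,0,\delta_2}}{\partial t}$ across the diagonal to be $\le 0$, and contradict the normalization value $+1$. Your added justification of the jump relation via \eqref{70_bis} and the explicit handling of $t_0=s_0\in\{0,1\}$ are slightly more detailed than the paper's argument but do not change the approach.
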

 \begin{proof}
From Lemma~\ref{L:cs_van_gen} we only need to discard the case $t_0=s_0$. In such a case, since $G_{M,0,\delta_2}$ is non-positive, it must occur that $\frac{\partial\, G_{M,0,\delta_2}}{\partial\,t} (t_0^-,t_0)\ge 0$ and $\frac{\partial\, G_{M,0,\delta_2}}{\partial\,t} (t_0^+,t_0)\le 0$, which contradicts the fact that
 	\[\frac{\partial\, G_{M,0,\delta_2}}{\partial\,t} (t^+,t) - \frac{\partial\, G_{M,0,\delta_2}}{\partial\,t} (t^-,t)=1  \quad \forall\, t\in(0,1). \] 
 	
 	As a consequence, the only possibility is that either $t_0=0$ or $t_0=1$.
 \end{proof}

\subsection{Expression of the Green's function}

Now, we obtain the exact expression of the Green's function related to problem \eqref{fous} by considering the different situations of the parameters $M$ and $\delta_2$. We start with $M \neq 0$, i.e., the situation in which problem \eqref{fous} is uniquely solvable for $\delta_2=0$.

We point out that the expressions of the Green's function $G_{M,0,0}(t,s)$ are deduced from reference \cite {programa_green}, where it has been constructed and algorithm that calculates the exact expression of the Green's function related to any $n$th order differential equation, with constant coefficients coupled to arbitrary homogeneous ($\delta_i=0, \; i=0, \ldots, n-1$) two-point linear boundary conditions. Such algorithm has been developed in a Mathematica package that is available at \cite{programa_green-wolfram}.

\subsubsection{$M\neq 0$}
In such a case, using expression \eqref{70_bis} and taking into account that  $\omega_2(t)=G_{M,0,0}(t,0)$ (see \cite{C1,C2}), the expression of the  Green's function related to problem~\eqref{fous} is given by
\begin{equation}\label{eq:GM02}
	G_{M,0,\delta_2}(t,s)=G_{M,0,0}(t,s)+ \dfrac{\delta_2 }{M-\delta_2}\, G_{M,0,0}(t,0).
\end{equation}

We shall consider two different cases:\\

\underline{ $M=m^{2}>0$, with $m\in (0,\infty)$:}\\

In such a case $G_{M,0,0}$ is given by the expression  
\begin{equation*}
G_{M,0,0}(t,s)=\dfrac{\csc\left(\frac{m}{2}\right)}{2m}\left\{
\begin{aligned}
& \cos \left(\frac{m}{2} \left(1+2s-2t\right)\right) ,\;\; 0\leq s\leq t\leq 1,\\
& \cos \left(\frac{m}{2} \left(1+2t-2s\right)\right) ,\;\; 0\le t<s\leq 1,
\end{aligned}
\right.
\end{equation*}
and so \eqref{eq:GM02} implies that

\begin{equation*}
	G_{M,0,\delta_2}(t,s)=\dfrac{\delta_2 \cos \left(\frac{m}{2} \left(1-2t\right)\right) }{m^{2} -\delta_2}+\frac{\csc \left(\frac{m}{2}\right)}{2m}\left\{
	\begin{aligned}
		& \cos \left(\frac{m}{2} \left(1+2s-2t\right)\right) ,\;\; 0\leq s\leq t \leq 1,\\
		& \cos \left(\frac{m}{2} \left(1+2t-2s\right)\right) ,\;\; 0\le t<s\leq 1.
	\end{aligned}
	\right.
\end{equation*}

\underline{  $M=-m^{2}<0$, with $m\in (0,\infty)$:}\\

In this case $G_{M,0,0}$  is given by
\begin{equation*}
G_{M,0,0}(t,s)=\frac{1}{2m\left(1-e^{m}\right)}
\begin{cases}
e^{m\left(1+s-t\right)}+e^{m\left(t-s\right)}, & 0\leq s\leq t\leq 1,\\
e^{m\left(1+t-s\right)}+e^{m\left(s-t\right)}, & 0\le t<s\leq 1,
\end{cases}
\end{equation*}
and thus

\begin{equation*}
	G_{M,0,\delta_2}(t,s)=-\frac{\delta_2}{m^2+\delta_2} \left(e^{m\left(1-t\right)}+e^{mt}\right)+\frac{1}{2m\left(1-e^{m}\right)} \left\{
	\begin{aligned}
		&e^{m\left(1+s-t\right)}+e^{m\left(t-s\right)},\;\; 0\leq s\leq t\leq 1,\\
		&e^{m\left(1+t-s\right)}+e^{m\left(s-t\right)},\;\; 0\le t<s\leq 1.
	\end{aligned}
	\right.
\end{equation*}

\subsubsection{$M=0$}
In this case, formula \eqref{14} is not valid to calculate the expression of the Green's function, so we shall compute it by direct integration. The solution of equation $u''(t)=\sigma(t)$ is given by 
\begin{equation*}
	u(t)=c_{1}+c_{2} t+\displaystyle \int_{0}^{t} \left(t-s\right) \sigma(s)ds.
\end{equation*}
Then, $u'(t)=c_{2}+ \int_{0}^{t} \sigma(s) ds$. Imposing condition $u(0)=u(1)$, we have that $c_{2}=-\int_{0}^{1} \left(1-s\right) \sigma(s)ds $. Therefore, $u'(0)-u'(1)=- \int_{0}^{1} \sigma(s) ds$. Since $u'(0)-u'(1)=\delta_2 \int_{0}^{1} u(s) ds$ we deduce that 
\begin{equation*}
	c_{1}= -\frac{1}{\delta_2} \int_{0}^{1} \sigma(s) ds -\frac{1}{2} \displaystyle \int_{0}^{1} \left(1-s^{2}\right) \sigma(s) ds+\displaystyle \int_{0}^{1} \left(s-s^{2}\right) \sigma(s) ds+ \frac{1}{2} \displaystyle \int_{0}^{1} \left(1-s\right) \sigma(s) ds. 
\end{equation*}
So, 
\begin{equation*}
	\begin{aligned}
		u(t)=& -\frac{1}{\delta_2} \int_{0}^{1} \sigma(s) ds -\frac{1}{2} \displaystyle \int_{0}^{1} \left(1-s^{2}\right) \sigma(s) ds + \int_{0}^{1} s\left(1-s\right) \sigma(s) ds   \\
		&+ \left(\frac{1}{2}-t\right)  \int_{0}^{1} \left(1-s\right) \sigma(s) ds + \int_{0}^{t} \left(t-s\right) \sigma(s) ds,\\
		=&  \int_{0}^{1} G_{0,0,\delta_2}(t,s) \, \sigma(s) ds,   
	\end{aligned}
\end{equation*}
where 
\begin{equation*}
	G_{0,0,\delta_2}(t,s)=\left\{
	\begin{aligned}
		& -\frac{s}{2}-\frac{s^{2}}{2} +st-\frac{1}{\delta_2},\;\; 0\leq s\leq t\leq 1,\\
		& \frac{s}{2}-\frac{s^{2}}{2}-t+st-\frac{1}{\delta_2},\;\; 0\le t<s\leq 1.
	\end{aligned}
	\right.
\end{equation*}

\subsection{Regions of constant sign of the Green's function}
We shall study now the regions in which previous functions have constant sign. First we note that we can bound these regions in the following way.
\begin{lemma}
	$G_{M,0,\delta_2}$ will never have constant sign  on $I \times I$ for all $M>\pi^2$.	
\end{lemma}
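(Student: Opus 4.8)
The plan is to argue by contradiction, exploiting that for $M=m^{2}>\pi^{2}$ we have $m>\pi$, so the distance $\pi/m$ between two consecutive zeros of any nontrivial solution of $u''+Mu=0$ is strictly less than $1$. Assume that $G:=G_{M,0,\delta_2}$ exists and has constant sign on $I\times I$; I assume throughout that $G\ge 0$, the case $G\le 0$ being entirely analogous. The idea is to freeze the second variable at a carefully chosen $s_0$ and regard $u(t):=G(t,s_0)$ as a solution of the homogeneous equation on each of the two subintervals $[0,s_0]$ and $[s_0,1]$, and then to show that the constant sign forces $u$ to vanish on a set so large that it clashes with the jump of $\partial_t G$ across the diagonal together with the boundary condition $u(0)=u(1)$.

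More precisely, since $m>\pi$ the interval $(0,1-\pi/m)$ is nonempty, and inside it I pick $s_0$ with $\sin(m s_0)\neq 0$, avoiding the discrete set $\{k\pi/m\}$. First I would look at $u$ on $[s_0,1]$: there $u$ solves $u''+Mu=0$, is nonnegative, and the interval has length $1-s_0>\pi/m$. Since a nontrivial solution cannot keep a constant sign on an interval longer than the distance $\pi/m$ between consecutive zeros, Sturm's comparison results (\cite{MW}) force $u\equiv 0$ on $[s_0,1]$; in particular $u(1)=0$ and $u'(s_0^+)=0$. Next I would use the jump relation $\partial_t G(s_0^+,s_0)-\partial_t G(s_0^-,s_0)=1$ (the same identity invoked in the proof of Lemma~\ref{L:nonpos_van}) to deduce $u'(s_0^-)=-1$. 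Solving $u''+Mu=0$ on $[0,s_0]$ with the Cauchy data $u(s_0)=0$, $u'(s_0^-)=-1$ gives $u(t)=\tfrac1m\sin\!\big(m(s_0-t)\big)$, whence $u(0)=\tfrac1m\sin(m s_0)\neq 0$ by the choice of $s_0$. Finally, the boundary condition $u(0)-u(1)=0$ together with $u(1)=0$ yields $u(0)=0$, contradicting $u(0)\neq 0$.

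The same computation, with every inequality reversed, disposes of the case $G\le 0$, so no value of $\delta_2$ can make $G_{M,0,\delta_2}$ of constant sign when $M>\pi^{2}$; note that $\delta_2$ never enters the argument, as only the first boundary condition $u(0)=u(1)$ and the diagonal jump are used. The step I expect to be the genuine crux is the passage ``constant sign on $[s_0,1]$ $\Rightarrow$ $u\equiv 0$ there'': it is exactly here that the threshold $\pi^{2}$ is used, through the length estimate $1-s_0>\pi/m$, and one must make sure the chosen $s_0$ simultaneously guarantees $1-s_0>\pi/m$ (to run Sturm) and $\sin(m s_0)\neq 0$ (to reach the final contradiction). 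Everything else---continuity of $G$ across the diagonal, the unit jump of $\partial_t G$, and the boundary condition---is routine bookkeeping, and the argument is uniform in $\delta_2$ and in $m>\pi$ because the interval $(0,1-\pi/m)$ is always nonempty.
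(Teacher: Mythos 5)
Your argument is correct, but it is a genuinely different proof from the one in the paper. The paper simply evaluates the slice $s=0$: from \eqref{eq:GM02} one gets $G_{M,0,\delta_2}(t,0)=\tfrac{m^2}{m^2-\delta_2}\,G_{M,0,0}(t,0)$, which is a nonzero multiple of $\cos\left(\tfrac{m}{2}(1-2t)\right)$, and this visibly changes sign on $[0,1]$ as soon as $m>\pi$ --- a one-line computation once the explicit formula is in hand. You instead run a structural Sturm-type argument: constant sign on the subinterval $[s_0,1]$ of length exceeding $\pi/m$ forces $G(\cdot,s_0)\equiv 0$ there, and then the unit jump of $\partial_t G$ across the diagonal propagates back to $u(0)=\tfrac1m\sin(ms_0)\neq 0$, contradicting the periodicity condition $G(0,s_0)=G(1,s_0)$. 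I checked the details (choice of $s_0\in(0,1-\pi/m)$ off the zero set of $\sin(m\,\cdot)$, the sign convention $\partial_tG(s_0^+,s_0)-\partial_tG(s_0^-,s_0)=1$ matching the one used in Lemma~\ref{L:nonpos_van}, and the Cauchy problem on $[0,s_0]$) and they all go through; the case where $G$ fails to exist at all (e.g.\ $M=4k^2\pi^2$ or $\delta_2=M$) is vacuous and correctly excluded by your standing assumption. What your route buys is independence from the closed-form expression of $G_{M,0,0}$ and from $\delta_2$ entirely --- it uses only the homogeneous equation off the diagonal, continuity, the unit derivative jump, and the first boundary condition --- and it is very much in the spirit of the paper's own Lemma~\ref{L:cs_van_gen}; what it costs is length, since the paper's explicit formula makes the sign change immediate.
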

\begin{proof}
	 From expression \eqref{eq:GM02} and the fact that
	\begin{equation*}
		G_{M,0,\delta_2}(t,0)=\left(1+\frac{\delta_2}{M-\delta_2}\right) G_{M,0,0}(t,0) =\frac{m^2}{m^{2}-\delta_2}\, \dfrac{\csc\left(\frac{m}{2}\right)\cos\left(\frac{m}{2} \left(1-2t\right)\right)}{2m},
	\end{equation*}
	it is immediately deduced that $G_{M,0,\delta_2}(t,0)$ is sign-changing on $I$ for any  $m>\pi$.
\end{proof}

\begin{lemma}
	The following properties are fulfilled:
	\begin{itemize}
		\item If $M<\delta_2\le 0$ then $G_{M,0,\delta_2}$ is negative on $I \times I$.
		\item If  $0\le\delta_2<M \le \pi^2$ then $G_{M,0,\delta_2}$ is positive on $I \times I$. 
		\item If $M=\pi^2$ and $0\le \delta_2<M$ then $G_{M,0,\delta_2}$  vanishes at the set $A:=\{(0,0),(0,1),(1,0),(1,1)\}$ and is positive on $(I \times I)\backslash A$.
	\end{itemize}
\end{lemma}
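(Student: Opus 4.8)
The plan is to extract every sign directly from the splitting \eqref{eq:GM02}, $G_{M,0,\delta_2}(t,s)=G_{M,0,0}(t,s)+\frac{\delta_2}{M-\delta_2}\,G_{M,0,0}(t,0)$, so that the behaviour of $G_{M,0,\delta_2}$ is governed by three quantities: the sign of the periodic kernel $G_{M,0,0}$ on $I\times I$, the sign of its trace $t\mapsto G_{M,0,0}(t,0)=\omega_2(t)$, and the sign of the scalar $\delta_2/(M-\delta_2)$. First I would record the signs of the periodic kernel from its explicit formulas. For $M=-m^2<0$ the prefactor $\frac{1}{2m(1-e^{m})}$ is negative (since $e^{m}>1$) while the bracketed sum of exponentials is positive, hence $G_{M,0,0}<0$ on $I\times I$, and in particular $G_{M,0,0}(t,0)<0$. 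For $M=m^2$ with $0<m<\pi$ one has $\csc(m/2)>0$ and the arguments $\frac{m}{2}(1+2s-2t)$ and $\frac{m}{2}(1+2t-2s)$ stay in $(-\pi/2,\pi/2)$, so $G_{M,0,0}>0$ on $I\times I$ and $G_{M,0,0}(t,0)>0$. At the borderline $m=\pi$ those arguments only lie in $[-\pi/2,\pi/2]$, giving $G_{\pi^2,0,0}\ge 0$ with equality exactly where they reach $\pm\pi/2$.

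For the first two items strict sign then follows by elementary bookkeeping. When $M<\delta_2\le 0$ we have $M<0$; the coefficient $\delta_2/(M-\delta_2)$ is a nonpositive number divided by a negative one, hence nonnegative, and both $G_{M,0,0}(t,s)$ and $G_{M,0,0}(t,0)$ are strictly negative, so $G_{M,0,\delta_2}$ is a strictly negative summand plus a nonpositive one and is therefore negative throughout $I\times I$. When $0\le\delta_2<M<\pi^2$ we have $M>0$, the coefficient is again nonnegative, and both kernels are strictly positive, which yields $G_{M,0,\delta_2}>0$. Since $M<\pi^2$ here, the vanishing Lemmas~\ref{L:nonneg_van} and~\ref{L:nonpos_van} would give an alternative route (prove the non-strict sign, then use them to confine any zero to $t=s$ or to $t\in\{0,1\}$ and inspect those points), but the product bookkeeping already delivers strictness directly.

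The genuinely delicate case is the third item, $M=\pi^2$, where those lemmas are unavailable because they require $M<\pi^2$ strictly. Here $\delta_2/(\pi^2-\delta_2)>0$ precisely because $\delta_2>0$, and $G_{\pi^2,0,\delta_2}(t,s)=G_{\pi^2,0,0}(t,s)+c\,G_{\pi^2,0,0}(t,0)$ with $c>0$ is a sum of two nonnegative terms; hence $G_{\pi^2,0,\delta_2}\ge 0$, and it vanishes at $(t,s)$ if and only if $G_{\pi^2,0,0}(t,s)=0$ and $G_{\pi^2,0,0}(t,0)=0$ simultaneously. The core computation is to intersect these two zero sets: $G_{\pi^2,0,0}(t,0)=\frac{1}{2\pi}\cos\!\big(\frac{\pi}{2}(1-2t)\big)$ vanishes only at $t\in\{0,1\}$, and inserting $t=0$ and $t=1$ into $G_{\pi^2,0,0}(t,s)$ forces $s\in\{0,1\}$ in each case, so the common zeros are exactly the four points of $A$. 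This gives $G_{\pi^2,0,\delta_2}=0$ on $A$ and $G_{\pi^2,0,\delta_2}>0$ on $(I\times I)\setminus A$.

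I expect the main obstacle, and the place that must be phrased carefully, to be the strict inequality $\delta_2>0$ in the third item. The unperturbed kernel $G_{\pi^2,0,0}$ vanishes along the whole diagonal, since $G_{\pi^2,0,0}(t,t)=\frac{1}{2\pi}\cos\!\big(\frac{\pi}{2}\big)=0$, together with the two corners $(0,1)$ and $(1,0)$; it is exactly the strictly positive perturbation $c\,G_{\pi^2,0,0}(t,0)$, positive for $t\in(0,1)$, that lifts the kernel off the open diagonal and confines the zeros to $A$. Hence the value $\delta_2=0$ must be excluded at $M=\pi^2$, and correspondingly the second item should be read with the strict bound $M<\pi^2$; otherwise the diagonal already produces interior zeros. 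Apart from this boundary care, every remaining step is routine monotonicity of $\cos$ and $\csc$ on $(0,\pi)$ together with sign bookkeeping, and no additional machinery is required.
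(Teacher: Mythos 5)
Your proposal is correct and follows the same route as the paper: every sign is read off from the decomposition \eqref{eq:GM02} together with the sign of the periodic kernel $G_{M,0,0}$, of its trace $G_{M,0,0}(\cdot,0)$, and of the scalar $\delta_2/(M-\delta_2)$. The one substantive difference is that you are more careful than the paper at the borderline $M=\pi^2$, and rightly so: the paper's one-line proof asserts that $G_{\pi^2,0,0}$ vanishes only on $A$ and is positive elsewhere, whereas in fact $G_{\pi^2,0,0}(t,t)=\frac{1}{2\pi}\cos\left(\frac{\pi}{2}\right)=0$ for every $t\in I$, so the unperturbed kernel vanishes along the whole diagonal. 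Your argument --- that for $\delta_2>0$ the perturbation $\frac{\delta_2}{\pi^2-\delta_2}\,G_{\pi^2,0,0}(t,0)$ is a nonnegative term whose zero set $\{t=0\}\cup\{t=1\}$ meets the zero set of $G_{\pi^2,0,0}(t,s)$ exactly in $A$ --- is the correct justification of the third item, and it correctly exposes that the item fails at $\delta_2=0$ and that the second item must be read with the strict bound $M<\pi^2$ (otherwise the two items contradict each other at $M=\pi^2$). So your write-up is not merely added detail; it repairs an inaccuracy in both the statement and the paper's proof.
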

\begin{proof}
	It is immediately deduced from \eqref{eq:GM02} and the fact that $G_{M,0,0}$ is negative on $I \times I$ for $M<0$, positive on $I \times I$ for $0<M<\pi$,  and positive on $(I \times I)\backslash A$, vanishing at the set $A$, for $M=\pi^2$.
\end{proof}

Moreover,
\[\dfrac{\partial G_{M,0,\delta_2}}{\partial \delta_2}(t,s)= \frac{M}{(M-\delta_2)^2}\, G_{M,0,0}(t,0)>0, \quad \forall\, M\in\left(-\infty,\pi^2\right)\setminus\{0\}, \ \delta_2\neq M, \ t,s\in I\]
and
\begin{equation}\label{e:derdelta2_cero}
\frac{\partial G_{0,0,\delta_2}}{\partial \delta_2}(t,s)= \left(\frac{1}{\delta_2}\right)^2>0 \quad \forall\,t,s\in I.
\end{equation}
As a consequence, for any fixed $M<\pi^2$,  $G_{M,0,\delta_2}$ is strictly increasing with respect to $\delta_2$ and so we deduce the following facts:
\begin{itemize}
	\item Since $G_{M,0,0}>0$ on $I \times I$ for $M\in(0,\pi^2)$, we know that  $G_{M,0,\delta_2}$ will be positive for some values of $\delta_2<0$. In particular, $G_{M,0,\delta_2}$ will be positive for $\delta_2 \in (\delta_2(M),0]$, where the optimal value $\delta_2(M)$  will be either  $-\infty$ or the biggest negative real value  for which $G_{M,0,\delta_2(M)}$ attains the value  zero at some point  $\left(t_{0},s_{0}\right)\in I\times I$.
	\item Since $G_{M,0,0}<0$ on $I \times I$ for $M<0$, we know that  $G_{M,0,\delta_2}$ will be negative for some values of $\delta_2>0$. In particular, $G_{M,0,\delta_2}$ will be negative for $\delta_2 \in [0,\delta_2(M))$, where the optimal value $\delta_2(M)$  will be either  $+\infty$ or the smallest positive real value  for which $G_{M,0,\delta_2(M)}$ attains the value  zero at some point  $\left(t_{0},s_{0}\right)\in I\times I$.
\end{itemize} 

Let us study now the range of values $\delta_2<0$ for which $G_{M,0,\delta_2}$ is positive.
\begin{theorem}\label{th:Mdelt2_pos}
	If $M=m^2$ with $m\in \left(0,\pi\right)$ and $\delta_2\le 0$, then $G_{M,0,\delta_2}(t,s)>0$ for all $(t,s)\in I\times I$  if and only if \[-\frac{m^{2}\cos\left(\frac{m}{2}\right)}{1-\cos\left(\frac{m}{2}\right)}< \delta_2  \le 0.\]
\end{theorem}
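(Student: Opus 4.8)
The plan is to exploit the monotone dependence on $\delta_2$ recorded just before the statement together with the vanishing characterisation of Lemma~\ref{L:nonneg_van}. Since $\frac{\partial G_{M,0,\delta_2}}{\partial \delta_2}>0$ on $I\times I$ for $M=m^2\in(0,\pi^2)$ and $G_{M,0,0}>0$ on $I\times I$, the function $G_{M,0,\delta_2}$ stays positive for $\delta_2$ in a maximal interval $(\delta_2(M),0]$, where $\delta_2(M)$ is either $-\infty$ or the largest negative value of the parameter for which $G_{M,0,\delta_2(M)}$ is still non-negative on $I\times I$ but vanishes at some $(t_0,s_0)$. Thus the whole problem reduces to computing this threshold, and the claim is exactly that $\delta_2(M)=-\dfrac{m^{2}\cos(m/2)}{1-\cos(m/2)}$.

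At $\delta_2=\delta_2(M)$ the function $G_{M,0,\delta_2(M)}$ is non-negative and vanishes somewhere, so, since $\delta_2(M)<0$, Lemma~\ref{L:nonneg_van} forces the vanishing point to satisfy $t_0=s_0$; hence it suffices to analyse $G_{M,0,\delta_2}$ on the diagonal. Using \eqref{eq:GM02} together with the explicit values $G_{M,0,0}(t,t)=\frac{\csc(m/2)}{2m}\cos\left(\frac{m}{2}\right)$ and $G_{M,0,0}(t,0)=\frac{\csc(m/2)}{2m}\cos\left(\frac{m}{2}(1-2t)\right)$, I would write
\[
G_{M,0,\delta_2}(t,t)=\frac{\csc(m/2)}{2m}\left(\cos\left(\frac{m}{2}\right)+\frac{\delta_2}{m^{2}-\delta_2}\,\cos\left(\frac{m}{2}(1-2t)\right)\right),\qquad t\in I.
\]
Now I would minimise this over $t\in[0,1]$. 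As $\frac{\csc(m/2)}{2m}>0$ and, for $\delta_2\le 0$, the coefficient $\frac{\delta_2}{m^{2}-\delta_2}$ is non-positive while $\cos\left(\frac{m}{2}(1-2t)\right)$ ranges over $[\cos(m/2),1]$ with maximum $1$ attained at the midpoint $t=\tfrac12$, the diagonal minimum is reached precisely at $t=\tfrac12$ and equals $\frac{\csc(m/2)}{2m}\big(\cos(m/2)+\frac{\delta_2}{m^{2}-\delta_2}\big)$. Setting this to zero gives $\frac{\delta_2}{m^{2}-\delta_2}=-\cos(m/2)$, and solving for $\delta_2$ yields $\delta_2(M)=-\dfrac{m^{2}\cos(m/2)}{1-\cos(m/2)}$. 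Monotonicity then closes both directions: for $\delta_2\in(\delta_2(M),0]$ strict monotonicity in $\delta_2$ gives $G_{M,0,\delta_2}(t,s)>G_{M,0,\delta_2(M)}(t,s)\ge 0$ on $I\times I$, while for $\delta_2\le\delta_2(M)$ one has $\frac{\delta_2}{m^{2}-\delta_2}\le-\cos(m/2)$, hence $G_{M,0,\delta_2}(\tfrac12,\tfrac12)\le 0$ and positivity fails.

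The step requiring most care is the reduction to the diagonal: one must be certain that the first zero encountered as $\delta_2$ decreases is genuinely a diagonal point and not a boundary point $t_0\in\{0,1\}$, which is exactly what Lemma~\ref{L:nonneg_van} rules out, and that the threshold is finite, i.e. that $-\cos(m/2)$ lies in the range $(-1,0)$ swept by $\frac{\delta_2}{m^{2}-\delta_2}$ as $\delta_2$ runs over $(-\infty,0]$; both hold for $m\in(0,\pi)$ since then $\cos(m/2)\in(0,1)$. A minor verification is that the midpoint, rather than the endpoints of the diagonal, realises the minimum, which is forced by the sign of the perturbation coefficient $\frac{\delta_2}{m^{2}-\delta_2}\le 0$.
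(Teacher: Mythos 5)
Your proof is correct and follows essentially the same route as the paper: Lemma~\ref{L:nonneg_van} reduces the question to the diagonal, the sign of the coefficient $\frac{\delta_2}{m^{2}-\delta_2}\le 0$ places the diagonal minimum at $t=\tfrac{1}{2}$, and solving $h\left(\tfrac{1}{2}\right)=0$ yields the stated threshold, with monotonicity in $\delta_2$ closing both implications. (Incidentally, your explicit diagonal formula also corrects a misprint in the paper's proof, where $\cot\left(\frac{m}{2}\right)$ appears as $\coth\left(\frac{m}{2}\right)$.)
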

\begin{proof}
	From Lemma \ref{L:nonneg_van}, we only need to study the values of function $G_{M,0,\delta_2}$ at the diagonal of the square of definition, where we get the function 
	\begin{equation*}
		h(t)=G_{M,0,\delta_2}(t,t)=\dfrac{\coth\left(\frac{m}{2}\right)}{2m}+\dfrac{\delta_2 \cos\left(\frac{m}{2} \left(1-2t\right)\right) \csc\left(\frac{m}{2}\right)}{2m\left(m^{2}-\delta_2\right)}, \;\; t\in I,
	\end{equation*}
	whose minimum is attained at $t=\frac{1}{2}$. Therefore, $h$ has positive sign on $I$ if and only if $h\left(\frac{1}{2}\right)$ is positive, that is, $\delta_2>-\frac{m^{2}\cos\left(\frac{m}{2}\right)}{1-\cos\left(\frac{m}{2}\right)}$.
\end{proof}

Let us analyse now the range of values $\delta_2>0$ for which $G_{M,0,\delta_2}$ is negative.
\begin{theorem}\label{th:Mdelt2_neg}
	Let $M=-m^{2}$ with $m\in(0,\infty)$ and $\delta_2 \ge 0$, then $G_{M,0,\delta_2}$ is strictly negative  on $I\times I$ if and only if \[0\le \delta_2<\dfrac{2m^{2}e^{\frac{m}{2}}}{1+e^{m}-2e^{\frac{m}{2}}}.\]
\end{theorem}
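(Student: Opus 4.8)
The plan is to mimic the structure of Theorem~\ref{th:Mdelt2_pos}, now working in the case $M=-m^2<0$. Here the relevant unperturbed Green's function $G_{M,0,0}$ is strictly negative on $I\times I$, so by the monotonicity discussion preceding the statement (namely $\partial G_{M,0,\delta_2}/\partial\delta_2>0$ for $M<\pi^2$) the function $G_{M,0,\delta_2}$ starts strictly negative at $\delta_2=0$ and increases as $\delta_2$ grows. Thus I expect negativity to persist on an interval $[0,\delta_2(M))$ and to fail precisely when the first zero appears. By Lemma~\ref{L:nonpos_van}, for a non-positive Green's function the vanishing (if any) can only occur on the boundary lines $t_0=0$ or $t_0=1$; moreover, since the periodic boundary condition $u(0)=u(1)$ forces $G_{M,0,\delta_2}(0,s)=G_{M,0,\delta_2}(1,s)$, these two cases coincide. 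So the whole problem reduces to studying the single-variable function $t\mapsto G_{M,0,\delta_2}(0,s)$, or equivalently the values of the Green's function along the edge $t=0$.

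First I would plug $t=0$ into the explicit formula for $G_{M,0,\delta_2}$ in the case $M=-m^2$ given just before this subsection. Using the piecewise expression (with $t=0<s$, so the second branch applies for $s\in(0,1]$) together with the perturbation term $-\tfrac{\delta_2}{m^2+\delta_2}(e^{m(1-t)}+e^{mt})$ evaluated at $t=0$, I obtain an explicit function $g(s):=G_{-m^2,0,\delta_2}(0,s)$ of the single variable $s\in[0,1]$. The key step is then to locate the extremum of $g$ over $s$: I expect, by analogy with the positive case where the diagonal minimum sat at the midpoint $t=\tfrac12$, that $g$ attains its maximum (the critical value for non-positivity) at an interior point, most plausibly $s=\tfrac12$ by the symmetry of the two exponential terms $e^{m(1+t-s)}+e^{m(s-t)}$ in $s$ about $s=\tfrac12$ when $t=0$. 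I would differentiate $g$ in $s$, check that the unique interior critical point is indeed $s=\tfrac12$, and confirm it is the location of the supremum of $g$.

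Once the worst point is identified, negativity of $G_{M,0,\delta_2}$ on all of $I\times I$ is equivalent to $g(\tfrac12)<0$ (strict, to match the ``strictly negative'' conclusion). I would substitute $s=\tfrac12$, $t=0$ into the combined expression, simplify the exponentials $e^{m/2}$ and $e^{m}$, and solve the resulting inequality for $\delta_2$. Keeping careful track of the sign of the denominator $m^2+\delta_2$ (which is positive throughout the relevant range $\delta_2\ge0$), the inequality $g(\tfrac12)<0$ should rearrange to exactly the stated bound
\[
0\le \delta_2<\dfrac{2m^{2}e^{\frac{m}{2}}}{1+e^{m}-2e^{\frac{m}{2}}},
\]
noting that $1+e^{m}-2e^{m/2}=(e^{m/2}-1)^2>0$ so the bound is a genuine positive number. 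The two implications then follow: if $\delta_2$ is below the bound, $g(\tfrac12)<0$ and hence $g<0$ everywhere on the edge and $G_{M,0,\delta_2}<0$ on $I\times I$ by the reduction above; conversely, if $\delta_2$ reaches or exceeds the bound, then $g(\tfrac12)\ge0$, producing a nonnegative value and destroying strict negativity.

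The main obstacle I anticipate is the reduction argument itself rather than the algebra: I must justify rigorously that the sign of $G_{M,0,\delta_2}$ on the whole square is controlled only by its values on the edge $t=0$. This is where Lemma~\ref{L:nonpos_van} is essential, but it only tells us \emph{where} a non-positive Green's function can vanish; to run the continuity/monotonicity argument cleanly I should argue that as $\delta_2$ increases from $0$, the first loss of strict negativity occurs at a boundary zero (by Lemma~\ref{L:nonpos_van} it cannot first occur on the diagonal), and then translate ``first boundary zero'' into the edge condition $g(\tfrac12)=0$. Verifying that $s=\tfrac12$ is truly the maximizing point of $g$ (and not merely a critical point) is the one place where a short but genuine computation is unavoidable.
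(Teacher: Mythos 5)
Your proposal is correct and follows essentially the same route as the paper: reduce via Lemma~\ref{L:nonpos_van} to the edges $t_0=0$ and $t_0=1$ (which coincide by periodicity), study the one-variable function $r(s)=G_{M,0,\delta_2}(0,s)$, locate its maximum at $s=\tfrac12$, and translate $r\left(\tfrac12\right)<0$ into the stated bound on $\delta_2$. Your extra care about the monotonicity in $\delta_2$ and the ``first loss of negativity'' argument only makes explicit what the paper leaves implicit.
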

\begin{proof}
	From Lemma \ref{L:nonpos_van}, we only need to study the values of function $G_{M,0,\delta_2}$ at the points of the form $\left(0,s\right)$ and $\left(1,s\right)$. So, we have to study the function
	\begin{equation*}
		r(s)=G_{M,0,\delta_2}(0,s)=G_{M,0,\delta_2}(1,s)=\frac{1}{2m\left(1-e^{m}\right)}\left(e^{m\left(1-s\right)}+e^{ms}-\frac{\delta_2}{\delta_2+m^{2}} \left(1+e^{m}\right) \right),
	\end{equation*}
	whose maximum value is attained at $s=\frac{1}{2}$. Therefore, $r$ is negative if and only if $r\left(\frac{1}{2}\right)<0$, that is, $\delta_2<\frac{2m^{2}e^{\frac{1}{2}}}{1+e^{m}-2e^{\frac{m}{2}}}$.
\end{proof}

From previous results and \eqref{e:derdelta2_cero}, we deduce the following facts:
\begin{itemize}
	\item Since $G_{M,0,\delta_2}>0$ for $M\in(0,\pi^2)$ and $-\frac{m^{2}\cos\left(\frac{m}{2}\right)}{1-\cos\left(\frac{m}{2}\right)}< \delta_2 \le 0$, we know that  $G_{0,0,\delta_2}$ will be positive for some values of $\delta_2<0$. In particular, $G_{0,0,\delta_2}$ will be positive for $\delta_2 \in (\delta_2(0),0)$, where the optimal value $\delta_2(0)$  will be either  $-\infty$ or the biggest negative real value  for which $G_{0,0,\delta_2(0)}$ attains the value  zero at some point  $\left(t_{0},s_{0}\right)\in I\times I$.
	\item Since $G_{M,0,\delta_2}<0$ for $M<0$ and $0\le \delta_2<\frac{2m^{2}e^{\frac{m}{2}}}{1+e^{m}-2e^{\frac{m}{2}}}$, we know that  $G_{0,0,\delta_2}$ will be negative for some values of $\delta_2>0$. In particular, $G_{0,0,\delta_2}$ will be negative for $\delta_2 \in (0,\delta_2(0))$, where the optimal value $\delta_2(0)$  will be either  $+\infty$ or the smallest positive real value  for which $G_{0,0,\delta_2(0)}$ attains the value  zero at some point  $\left(t_{0},s_{0}\right)\in I\times I$.
\end{itemize} 

Let's study the sign of function  $G_{0,0,\delta_2}$ according to the value of $\delta_2\in \mathbb{R}\setminus\{0\}$.

\begin{theorem}
$G_{0,0,\delta_2}$ is strictly negative on $I\times I$ if and only if $\delta_2\in\left(0,8\right)$.
\end{theorem}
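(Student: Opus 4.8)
The plan is to follow the template of Theorem~\ref{th:Mdelt2_neg}, reducing the sign analysis to a one–variable problem on the boundary of the square. Since $M=0<\pi^2$ and we are considering $\delta_2>0=M$, Lemma~\ref{L:nonpos_van} applies: whenever $G_{0,0,\delta_2}$ is non-positive and vanishes somewhere, the zero must occur at $t_0=0$ or $t_0=1$. Combined with the strict monotonicity in $\delta_2$ recorded in \eqref{e:derdelta2_cero} and the fact, established in the bullet points just above, that $G_{0,0,\delta_2}<0$ on $I\times I$ for all sufficiently small $\delta_2>0$, this shows that the optimal threshold $\delta_2(0)$ is the smallest positive value of $\delta_2$ at which the boundary restriction first touches zero.

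First I would record that, directly from the piecewise expression of $G_{0,0,\delta_2}$, the two boundary restrictions coincide:
\[
r(s):=G_{0,0,\delta_2}(0,s)=G_{0,0,\delta_2}(1,s)=\frac{s}{2}-\frac{s^2}{2}-\frac{1}{\delta_2},\qquad s\in I.
\]
The map $s\mapsto \tfrac12 s(1-s)$ attains its maximum on $I$ at $s=\tfrac12$, so $r$ is maximized there with $r(\tfrac12)=\tfrac18-\tfrac1{\delta_2}$. Hence, for $\delta_2>0$, the restriction $r$ is strictly negative on $I$ precisely when $\tfrac18-\tfrac1{\delta_2}<0$, i.e. $\delta_2<8$; at $\delta_2=8$ it vanishes at $s=\tfrac12$, and for $\delta_2>8$ it becomes positive there.

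It then remains to upgrade negativity on the boundary to strict negativity on the whole square. For $\delta_2\in(0,8)$ I would argue that $G_{0,0,\delta_2}$ cannot vanish in $(0,1)\times I$: by Lemma~\ref{L:cs_van_gen} any interior zero of a constant–sign Green's function lies on the diagonal $t_0=s_0$, while Lemma~\ref{L:nonpos_van} rules out diagonal zeros for a non-positive $G_{0,0,\delta_2}$ with $\delta_2>0$; together with $r<0$ on the boundary this forces $G_{0,0,\delta_2}<0$ throughout $I\times I$. A more transparent alternative, which I would actually favour to keep the argument self-contained, is to maximize $G_{0,0,\delta_2}$ directly on each triangular piece: on $\{s\le t\}$ it is affine increasing in $t$, so its maximum is on $t=1$, and on $\{t<s\}$ it is affine decreasing in $t$, so its maximum is on $t=0$; in both cases the supremum equals $\max_{s}r(s)=\tfrac18-\tfrac1{\delta_2}$. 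This yields $\sup_{I\times I}G_{0,0,\delta_2}=\tfrac18-\tfrac1{\delta_2}$ outright, from which the \emph{if} direction follows.

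For the converse, necessity of $\delta_2\in(0,8)$, I would note that for $\delta_2\ge 8$ the value $r(\tfrac12)\ge 0$ shows $G_{0,0,\delta_2}$ is not strictly negative (it vanishes at $(0,\tfrac12)$ when $\delta_2=8$ and is positive there when $\delta_2>8$), while for $\delta_2<0$ one has $G_{0,0,\delta_2}(0,0)=-\tfrac1{\delta_2}>0$, so strict negativity fails as well. The only delicate point in the whole argument is the reduction to the boundary, i.e. the correct invocation of Lemmas~\ref{L:cs_van_gen} and~\ref{L:nonpos_van}; the direct maximization of the explicit piecewise expression sidesteps this difficulty entirely and makes the threshold $\delta_2=8$ appear without any appeal to the auxiliary lemmas.
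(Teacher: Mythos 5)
Your proposal is correct, and its main line is essentially the paper's own argument: reduce to the restrictions $t=0$ and $t=1$ via the vanishing lemma for non-positive Green's functions, observe that $r(s)=\frac{s}{2}-\frac{s^2}{2}-\frac{1}{\delta_2}$ peaks at $s=\frac12$, and read off the threshold $\delta_2=8$ from $r\left(\frac12\right)=\frac18-\frac1{\delta_2}$. (Incidentally, you cite the correct lemma, Lemma~\ref{L:nonpos_van}; the paper's proof refers to Lemma~\ref{L:nonneg_van}, which is evidently a slip since the relevant configuration here is a non-positive Green's function with $\delta_2>M=0$.) Your alternative argument is a genuinely different and more elementary route the paper does not take: since $G_{0,0,\delta_2}$ is affine in $t$ on each triangle, with slope $s\ge 0$ on $\{s\le t\}$ and slope $s-1\le 0$ on $\{t<s\}$, the supremum over the whole square is attained on $\{t=1\}\cup\{t=0\}$ and equals $\frac18-\frac1{\delta_2}$ outright. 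This buys two things: it avoids the somewhat delicate logic of using the vanishing lemmas (which presuppose constant sign) together with monotonicity in $\delta_2$ to locate the first zero, and it makes the necessity direction immediate, including the case $\delta_2<0$ via $G_{0,0,\delta_2}(0,0)=-\frac1{\delta_2}>0$, a direction the paper's one-line proof leaves implicit.
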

\begin{proof}
	For $\delta_2>0$, using Lemma \ref{L:nonneg_van}, the function to study in this case is 
	\begin{equation*}
		r(s)=G_{0,0,\delta_2}(0,s)=G_{0,0,\delta_2}(1,s)=\frac{s}{2}-\frac{s^{2}}{2}-\frac{1}{\delta_2},\;\; s\in I,
	\end{equation*}
	which reaches its maximum at $s=\frac{1}{2}$. As a consequence, $G_{0,0,\delta_2}$ is negative if and only if $r\left(\frac{1}{2}\right)<0$, that is, $0<\delta_2<8$.
\end{proof}

Using the same arguments, by means of Lemma \ref{L:nonpos_van}, we arrive at the following result for the negative sign of $\delta_2$.
\begin{theorem}
$G_{0,0,\delta_2}$ is strictly positive on $I\times I$ if and only if $\delta_2\in\left(-8,0\right)$.
\end{theorem}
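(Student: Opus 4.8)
The plan is to mirror the argument used in the preceding theorem (the one characterizing negativity for $\delta_2\in(0,8)$), exploiting the explicit expression for $G_{0,0,\delta_2}$ together with the vanishing-location lemmas. Concretely, I would first invoke the symmetry afforded by Lemma~\ref{G-symmetric}: taking $M=0$, $\delta_1=0$ there gives $G_{0,0,\delta_2}(t,s)=G_{0,0,\delta_2}(1-t,1-s)$, so the sign behaviour for $\delta_2<0$ should be convertible to the already-settled case $\delta_2>0$. Alternatively, and perhaps more transparently for a self-contained write-up, I would argue directly in the spirit of the author's own remark preceding the statement: since $\partial G_{0,0,\delta_2}/\partial\delta_2=(1/\delta_2)^2>0$ by~\eqref{e:derdelta2_cero}, the Green's function is strictly increasing in $\delta_2$, and for $\delta_2<0$ we are on the branch where $G_{0,0,\delta_2}$ is a candidate to be positive.

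The key observation is that for $\delta_2<0$ the relevant vanishing lemma is Lemma~\ref{L:nonneg_van} (with $M=0<\pi^2$), which tells us that a non-negative $G_{0,0,\delta_2}$ can only vanish on the diagonal $t_0=s_0$. Hence, exactly as in the $\delta_2>0$ computation but now reducing to the diagonal rather than to the edges $t=0,1$, I would write down the restriction
\begin{equation*}
h(s)=G_{0,0,\delta_2}(s,s)=-\frac{s}{2}-\frac{s^2}{2}+s^2-\frac{1}{\delta_2}=-\frac{s}{2}+\frac{s^2}{2}-\frac{1}{\delta_2},\qquad s\in I,
\end{equation*}
obtained from the first branch of the explicit formula for $G_{0,0,\delta_2}$ by setting $t=s$. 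I would then locate the extremum of $h$ on $[0,1]$ (the quadratic $s^2/2-s/2$ has its minimum at $s=\tfrac12$), so that for $\delta_2<0$ the term $-1/\delta_2>0$ is added and the \emph{minimum} of $h$ is attained at $s=\tfrac12$. Positivity of $G_{0,0,\delta_2}$ on the whole square then reduces to the single inequality $h(\tfrac12)>0$, i.e. $-\tfrac18-1/\delta_2>0$, which rearranges to $\delta_2>-8$; combined with $\delta_2<0$ this yields precisely $\delta_2\in(-8,0)$. The converse (that the sign fails outside this interval) follows because at $\delta_2=-8$ the diagonal value $h(\tfrac12)$ hits zero and for $\delta_2<-8$ it becomes negative, contradicting constant sign.

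The step I expect to require the most care is the bookkeeping of which branch and which extremum is relevant once the sign of $\delta_2$ flips. For $\delta_2>0$ the author reduced to the edge $s\mapsto G_{0,0,\delta_2}(0,s)$ and maximized; for $\delta_2<0$ the correct reduction is instead to the diagonal and one must check that $s=\tfrac12$ gives a \emph{minimum} (not a maximum) of the diagonal restriction, so that a single-point evaluation genuinely controls the sign everywhere. Everything else is routine: the explicit formula is already given, the derivative sign~\eqref{e:derdelta2_cero} establishes monotonicity, and Lemma~\ref{L:nonneg_van} (via Lemma~\ref{L:cs_van_gen}) legitimately confines the analysis to the diagonal. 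If one prefers to avoid recomputing, the cleanest route is simply to apply Lemma~\ref{G-symmetric} to transport the interval $(0,8)$ of the previous theorem to $(-8,0)$, noting that $(t,s)\mapsto(1-t,1-s)$ maps $I\times I$ onto itself and reverses the sign convention through $\delta_2\mapsto$ the reflected problem; I would present the direct diagonal computation as the main argument and mention the symmetry reduction as the conceptual reason the endpoint $8$ reappears as $-8$.
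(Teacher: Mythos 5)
Your main argument is correct and is essentially the paper's own: the paper proves this statement by the phrase ``using the same arguments'' applied to the previous theorem, i.e.\ exactly your reduction to the diagonal via Lemma~\ref{L:nonneg_van} (note the paper's citations of Lemmas~\ref{L:nonneg_van} and \ref{L:nonpos_van} in these two theorems appear to be swapped; your assignment is the right one), followed by minimizing $h(s)=-\tfrac{s}{2}+\tfrac{s^2}{2}-\tfrac{1}{\delta_2}$ at $s=\tfrac12$ and requiring $h\left(\tfrac12\right)=-\tfrac18-\tfrac{1}{\delta_2}>0$, which for $\delta_2<0$ gives $\delta_2\in(-8,0)$.

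However, the symmetry shortcut you offer twice, and even call ``the cleanest route,'' does not work. Lemma~\ref{G-symmetric} states $G_{M,\delta_1,\delta_2}(t,s)=G_{M,-\delta_1,\delta_2}(1-t,1-s)$: it reflects $\delta_1$, not $\delta_2$, and it preserves the sign of the Green's function. With $\delta_1=0$ it degenerates to the self-symmetry $G_{0,0,\delta_2}(t,s)=G_{0,0,\delta_2}(1-t,1-s)$, so it cannot transport the negativity interval $(0,8)$ into the positivity interval $(-8,0)$; there is no $\delta_2\mapsto-\delta_2$ symmetry in problem~\eqref{fous}, and the reappearance of the number $8$ is the outcome of two separate extremal computations (the edge restriction $r\left(\tfrac12\right)$ for $\delta_2>0$ versus the diagonal restriction $h\left(\tfrac12\right)$ for $\delta_2<0$), not of a reflection. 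Keep the direct diagonal computation as the proof and delete the symmetry remark; with that removed, your argument matches the paper's.
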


Finally, we have that:
\begin{itemize}
	\item Since $G_{0,0,\delta_2}>0$ on $I \times I$ for  $\delta_2\in(-8,0)$, we know that  $G_{M,0,\delta_2}$ will be positive for some values of  $\delta_2<M<0$. In particular, $G_{M,0,\delta_2}$ will be positive for $\delta_2 \in (\delta_2(M),M)$, where the optimal value $\delta_2(M)$  will be either  $-\infty$ or the biggest negative real value  for which $G_{M,0,\delta_2(M)}$ attains the value  zero at some point  $\left(t_{0},s_{0}\right)\in I\times I$.
	\item Since $G_{0,0,\delta_2}<0$ on $I \times I$ for $\delta_2\in (0,8)$, we know that  $G_{M,0,\delta_2}$ will be negative for some values of $\delta_2>M>0$. In particular, $G_{M,0,\delta_2}$ will be negative for $\delta_2 \in (M,\delta_2(M))$, where the optimal value $\delta_2(M)$  will be either  $+\infty$ or the smallest positive real value  for which $G_{M,0,\delta_2(M)}$ attains the value  zero at some point  $\left(t_{0},s_{0}\right)\in I\times I$.
\end{itemize} 

\begin{theorem}\label{th:Mdelt2_neg2}
	If $M=m^2$ with $m\in \left(0,\pi\right)$, then $G_{M,0,\delta_2}(t,s)<0$ for all $(t,s)\in I\times I$  if and only if \[m^{2}<\delta_2<\dfrac{m^{2}}{1-\cos\left(\frac{m}{2}\right)}.\]
\end{theorem}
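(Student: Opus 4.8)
The plan is to mirror the strategy of Theorem~\ref{th:Mdelt2_neg}, reducing the two-variable sign question to a one-variable problem on the boundary lines $t\in\{0,1\}$. First I would pin down the range of admissible $\delta_2$. Since $M=m^2<\pi^2$, the preceding results show that $G_{M,0,\delta_2}$ is strictly positive for $0\le\delta_2<m^2$ and is never strictly negative for $\delta_2\le0$ (by Theorem~\ref{th:Mdelt2_pos}), while $\delta_2=m^2$ is an eigenvalue of \eqref{fous}; hence strict negativity can occur only for $\delta_2>m^2$, which already yields the lower bound. Moreover, because $G_{M,0,0}(\cdot,0)>0$ on $I$ for $m\in(0,\pi)$, letting $\delta_2\to m^{2+}$ in \eqref{eq:GM02} forces $G_{M,0,\delta_2}\to-\infty$, so $G_{M,0,\delta_2}$ is strictly negative for $\delta_2$ just above $m^2$.

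Next I would combine the monotonicity $\partial G_{M,0,\delta_2}/\partial\delta_2>0$, established earlier for $M\in(0,\pi^2)$, with Lemma~\ref{L:nonpos_van}. As $\delta_2$ increases from $m^2$, $G_{M,0,\delta_2}$ increases pointwise, so the quantity to determine is the smallest $\delta_2>m^2$ at which $G_{M,0,\delta_2}$ first attains the value zero. Just below that threshold $G_{M,0,\delta_2}$ is non-positive, so Lemma~\ref{L:nonpos_van} applies and guarantees that such a zero must occur at $t_0=0$ or $t_0=1$. Consequently the threshold is detected entirely by the one-variable function $r(s):=G_{M,0,\delta_2}(0,s)=G_{M,0,\delta_2}(1,s)$, and no interior or diagonal zero can appear first.

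Then I would compute $r$ from \eqref{eq:GM02} using $G_{M,0,0}(0,s)$ and $G_{M,0,0}(0,0)$, obtaining
\[ r(s)=\frac{\csc\left(\tfrac{m}{2}\right)}{2m}\left(\cos\left(\tfrac{m}{2}(1-2s)\right)+\frac{\delta_2\cos\left(\tfrac{m}{2}\right)}{m^2-\delta_2}\right). \]
The prefactor is positive and the only $s$-dependent term, $\cos\!\big(\tfrac{m}{2}(1-2s)\big)$, is maximised at $s=\tfrac12$; hence $r(s)<0$ for all $s\in I$ if and only if $r(\tfrac12)<0$. Since $m^2-\delta_2<0$, the inequality $r(\tfrac12)<0$ is equivalent to $1+\dfrac{\delta_2\cos(m/2)}{m^2-\delta_2}<0$, i.e. $\delta_2\,(1-\cos(\tfrac{m}{2}))<m^2$, which is precisely $\delta_2<\dfrac{m^2}{1-\cos(\tfrac{m}{2})}$. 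As $1-\cos(\tfrac{m}{2})\in(0,1)$ for $m\in(0,\pi)$, this bound exceeds $m^2$, so the interval $\big(m^2,\tfrac{m^2}{1-\cos(m/2)}\big)$ is non-empty and the two bounds combine into the claimed characterisation.

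The main obstacle is the justification of the reduction to the boundary: one must argue that, as $\delta_2$ grows past $m^2$, the function $G_{M,0,\delta_2}$ stays non-positive on the whole square up to the first zero, so that Lemma~\ref{L:nonpos_van} is legitimately applicable and no zero on the diagonal or in the interior can occur before $r(\tfrac12)$ vanishes. This is exactly where the pointwise monotonicity in $\delta_2$, the blow-up as $\delta_2\to m^{2+}$, and Lemma~\ref{L:nonpos_van} must be assembled carefully; once this reduction is secured, the trigonometric optimisation of $r$ and the algebraic simplification of $r(\tfrac12)<0$ are routine.
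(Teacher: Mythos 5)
Your proposal is correct and follows essentially the same route as the paper: Lemma~\ref{L:nonpos_van} reduces the question to the boundary function $r(s)=G_{M,0,\delta_2}(0,s)=G_{M,0,\delta_2}(1,s)$, whose maximum at $s=\tfrac{1}{2}$ yields the upper bound $\delta_2<\frac{m^{2}}{1-\cos\left(\frac{m}{2}\right)}$, with the lower bound $\delta_2>m^2$ and the legitimacy of applying the lemma resting on the monotonicity of $G_{M,0,\delta_2}$ in $\delta_2$. The only difference is in how the negativity region is seeded (you use the blow-up of $G_{M,0,\delta_2}$ as $\delta_2\to m^{2+}$, which is arguably cleaner, while the paper relies on its earlier bullet-point discussion of the optimal value $\delta_2(M)$), but the core reduction and computation coincide.
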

\begin{proof}
	Reasoning as before, using Lemma \ref{L:nonpos_van},  let's now look at the value of $\delta_2$ positive such that  $\delta_2>m^{2}, m\in \left(0,\pi\right)$ the  function $G_{M,0,\delta_2}$ is negative. 
	
	At the points of the form $\left(0,s\right)$ and $\left(1,s\right)$  the corresponding function to study is 
	\begin{equation*}
		r(s)=G_{M,0,\delta_2}(0,s)=G_{M,0,\delta_2}(1,s)=\dfrac{\csc\left(\frac{m}{2} \right)}{2m}\left[ \cos \left(\frac{m}{2} \left(1-2t\right) \right)+ \frac{\delta_2}{m^{2}-\delta_2} \cos\left(\frac{m}{2}\right) \right].
	\end{equation*}
	In this case, $r$ has an absolute maximum at $s=\frac{1}{2}$.
	Thus, $r(s)<0$ for all $s\in I$ if and only if  $r\left(\frac{1}{2}\right)<0$, that is, 
	$\delta_2<\frac{m^{2}}{1-\cos\left(\frac{m}{2}\right)}$.
\end{proof}

We will now make a study of the positive sign of $G_{M,0,\delta_2}$ for $m\in (0,\infty)$ and $\delta_2<-m^{2}<0$. 
\begin{theorem}\label{th:Mdelt2_pos2}
	Let $M=-m^{2}$ with $m\in (0,\infty)$, then the Green's function related to problem \eqref{fous} is strictly positive   on $I\times I$ if and only if $\delta_2>-\dfrac{m^2\left(1+e^{m}\right)}{1+e^{m}-2e^{\frac{m}{2}}}$.
\end{theorem}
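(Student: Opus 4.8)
The plan is to follow the template of Theorem~\ref{th:Mdelt2_pos}, reducing the two-variable positivity question to a one-variable problem on the diagonal. Since $M=-m^{2}<0<\pi^{2}$, Lemma~\ref{L:cs_van_gen} confines every zero of a constant sign $G_{M,0,\delta_2}$ to $t_0=0$, $t_0=1$ or $t_0=s_0$. Because the positivity range sought here satisfies $\delta_2<M=-m^{2}<0$ (as in the itemized discussion preceding the statement), we have $\delta_2<0$ and Lemma~\ref{L:nonneg_van} applies, discarding the cases $t_0\in\{0,1\}$; thus a non-negative $G_{M,0,\delta_2}$ can only vanish on the diagonal. Combining this with the strict monotonicity of $G_{M,0,\delta_2}$ in $\delta_2$ --- recall $\frac{\partial G_{M,0,\delta_2}}{\partial\delta_2}(t,s)=\frac{M}{(M-\delta_2)^{2}}\,G_{M,0,0}(t,0)>0$ for $M<0$ --- and continuity, I would argue that strict positivity of $G_{M,0,\delta_2}$ on $I\times I$ is equivalent to strict positivity of its diagonal restriction $h(t):=G_{M,0,\delta_2}(t,t)$.

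First I would compute $h$ from \eqref{eq:GM02} and the known expression of $G_{-m^{2},0,0}$, obtaining $h(t)=\frac{1+e^{m}}{2m(1-e^{m})}-\frac{\delta_2}{m^{2}+\delta_2}\,\frac{e^{m(1-t)}+e^{mt}}{2m(1-e^{m})}$, i.e.\ $h(t)=c_1+c_2\,g(t)$ with $g(t)=e^{m(1-t)}+e^{mt}$. A sign check shows that, exactly when $\delta_2<-m^{2}$, one has $c_2>0$: indeed $\delta_2<0$ and $m^{2}+\delta_2<0$ force $\frac{\delta_2}{m^{2}+\delta_2}>0$, while $\frac{1}{2m(1-e^{m})}<0$. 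Since $g$ is strictly convex with $g'(1/2)=0$, it attains its unique minimum at $t=\tfrac12$, and hence so does $h$ (this is also consistent with the reflection symmetry $h(t)=h(1-t)$ furnished by Lemma~\ref{G-symmetric} with $\delta_1=0$). Therefore $h>0$ on $I$ if and only if $h(\tfrac12)>0$.

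Finally I would solve the scalar inequality $h(\tfrac12)>0$. Using $g(\tfrac12)=2e^{m/2}$ and clearing the negative factor $\frac{1}{2m(1-e^{m})}$, the boundary equation $h(\tfrac12)=0$ becomes $\frac{2e^{m/2}\delta_2}{m^{2}+\delta_2}=1+e^{m}$, whose unique solution is $\delta_2^{\ast}:=-\frac{m^{2}(1+e^{m})}{1+e^{m}-2e^{m/2}}$; this is well defined since $1+e^{m}-2e^{m/2}=(e^{m/2}-1)^{2}>0$, and it satisfies $\delta_2^{\ast}<-m^{2}=M$ because $1+e^{m}>(e^{m/2}-1)^{2}$. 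Invoking once more the strict monotonicity of $\delta_2\mapsto h(\tfrac12)$ on $(-\infty,M)$, I would conclude that $h(\tfrac12)>0$ precisely when $\delta_2>\delta_2^{\ast}$, which is the asserted characterization.

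The step I expect to be delicate is the bookkeeping around the pole of $\frac{\delta_2}{M-\delta_2}$ at $\delta_2=M$. One must verify that the positivity regime singled out by the diagonal analysis is the interval $\delta_2\in(\delta_2^{\ast},M)$ with $\delta_2^{\ast}<0$, so that the hypothesis $\delta_2<0$ of Lemma~\ref{L:nonneg_van} genuinely holds, and that the sign of $c_2$ --- hence the location of the minimum of $h$ --- is pinned down in this regime and not in the complementary range $\delta_2>M$, where $G_{M,0,\delta_2}$ is instead non-positive.
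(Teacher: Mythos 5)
Your proposal is correct and follows essentially the same route as the paper's proof: reduce to the diagonal via Lemma~\ref{L:nonneg_van}, observe that $h(t)=G_{M,0,\delta_2}(t,t)$ attains its minimum at $t=\tfrac12$, and solve $h\left(\tfrac12\right)>0$ for $\delta_2$. Your additional bookkeeping (the sign of the coefficient of $e^{m(1-t)}+e^{mt}$, the verification that $\delta_2^{\ast}<M$, and the applicability of Lemma~\ref{L:nonneg_van} in the regime $\delta_2<M<0$) only makes explicit steps the paper leaves implicit.
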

\begin{proof}
From Lemma \ref{L:nonneg_van} we only must to study the behavior of the Green's function at the points of its diagonal:
	\begin{equation*}
		h(t)=G_{M,0,\delta_2}(t,t)=\frac{e^{m}+1}{2m\left(1-e^{m}\right)}- \frac{\delta_2}{\delta_2+m^{2}} \frac{e^{m\left(1-t\right)}+e^{mt}}{2m\left(1-e^{m}\right)}
	\end{equation*}
	has in this case an absolute minimum at $t=\frac{1}{2}$. So  $h$ is positive on $I$ if and only if $h\left(\frac{1}{2}\right)>0$, that is, 
	$\delta_2>-\dfrac{m^2\left(1+e^{m}\right)}{1+e^{m}-2e^{\frac{m}{2}}}$.
\end{proof}

Figure~\ref{Fig:sign_G} shows the regions where the function $G_{M,0,\delta_2}$ maintains a constant sign.
\begin{figure}[H]
	\centering
	\includegraphics[width=10cm]{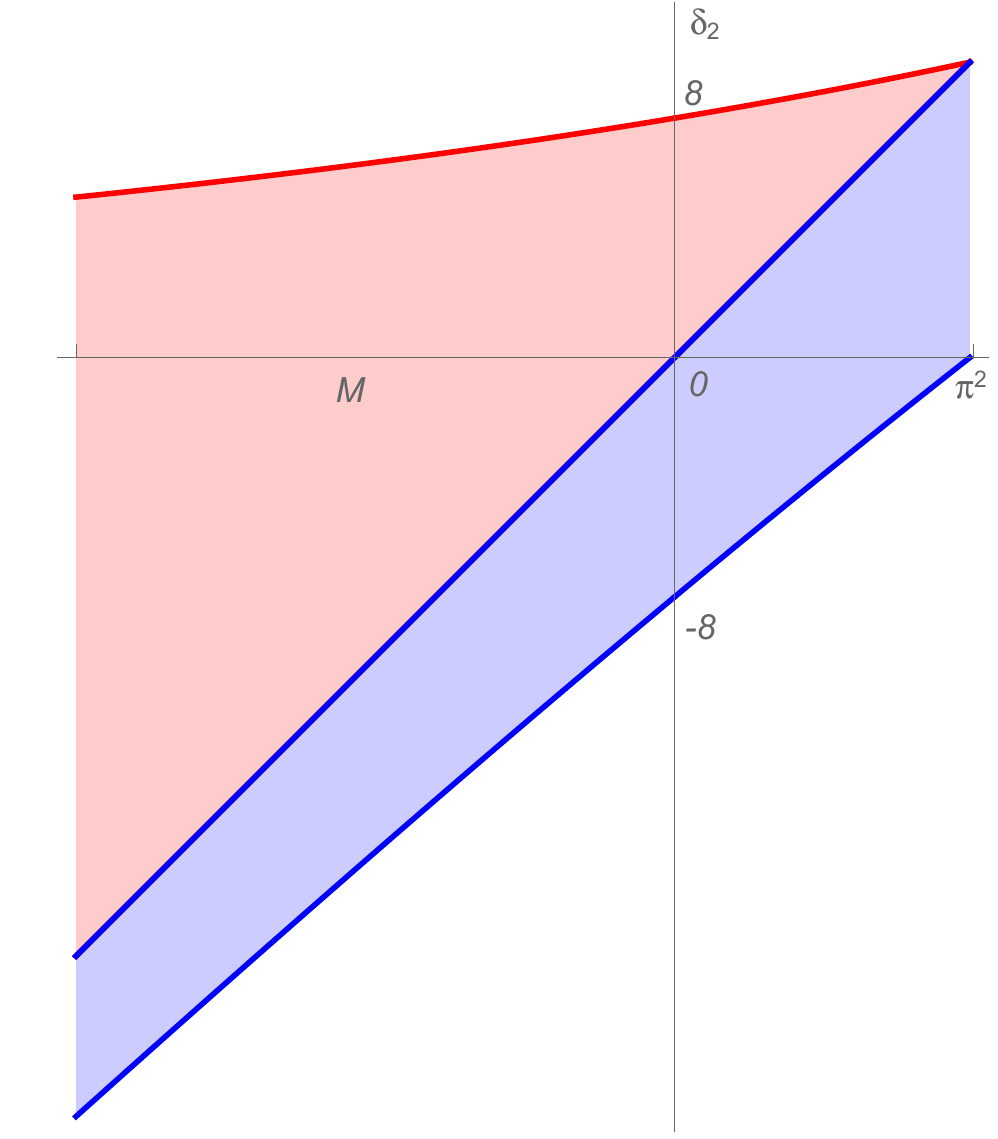}\\
	\caption{The blue region represents the positive sign of $G_{M,0,\delta_2}$ while the red region corresponds to the negative sign of $G_{M,0,\delta_2}$ at the  $(M,\delta_2)$-plane.} \label{Fig:sign_G}
\end{figure}

\section{Case $\delta_1\neq 0$}

In this last section, by continuing the study of the previous case, we consider the general situation of $\delta_1\neq 0$. So, for this case, we  calculate the regions of constant sign of the Green's function related problem~\eqref{eg}. We divide the study in two situations, depending on the fact that the parameter $M$ is or not equals to zero. 

In a first moment we obtain the expression of the Green's function.

\subsection{Expression of the Green's function}
In this subsection we obtain the expression of the Green's function related to the problem \eqref{eg} as a function of the real parameter $M$. 

\subsubsection{$M\neq 0$}
Using formula \eqref{70_bis} and the fact that $\omega_1(t)=\omega_2'(t)$, it is immediate to verify that the expression of  $G_{\delta_1,\delta_2,M}$ is given by
\begin{equation}
	\label{e-G-1-2}
	G_{M,\delta_1,\delta_2}(t,s)=G_{M,0,0}(t,s)+\dfrac{\delta_1 \, \omega_{2}'(t)+ \delta_2\, \omega_2(t)}{M-\delta_2},
	\end{equation}
where
\[\omega_2(t)=\frac{1}{2m}\begin{cases} 
\cos\left(\frac{m}{2}\,(2\,t-1)\right) \csc\left(\frac{m}{2}\right), & M=m^2>0, \ M\neq 4k^2 \pi^2, \ k=0,1,\dots \\ \\
-\cosh\left(\frac{m}{2}\,(2\,t-1)\right) \csch \left(\frac{m}{2}\right), & M=-m^2<0.	
\end{cases}\]
Thus, for $M=m^2>0$, $m>0$, $ M\neq 4k^2 \pi^2, \ k=0,1,\dots $,  $G_{\delta_1,\delta_2,M}$ follows the expression
\begin{equation*}
	G_{M,\delta_1,\delta_2}(t,s)= \frac{\csc\left(\frac{m}{2}\right)}{2m}
	\begin{cases}
\cos\left(m\left(\frac{1}{2}+s-t\right)\right) +\frac{\delta_2 \, \cos\left(m\left(\frac{1}{2}-t\right) \right) +m \delta_1 \, \sin\left(m\left(\frac{1}{2}-t\right) \right)}{m^{2}-\delta_2}, & 0\leq s\leq t\leq 1,\\ \\
\cos\left(m\left(\frac{1}{2}+t-s\right)\right) +\frac{\delta_2\cos\left(m\left(\frac{1}{2}-t\right) \right) +m \delta_1 \,\sin\left(m\left(\frac{1}{2}-t\right) \right) }{m^{2}-\delta_2}, & 0\le t<s\leq 1,
	\end{cases}
\end{equation*}
and for $M=-m^2$, $m>0$, the expression of $G_{\delta_1,\delta_2,M}$ is given by 
\begin{equation*}
\negthickspace	G_{M,\delta_1,\delta_2}(t,s)=\frac{\csch\left(\frac{m}{2}\right)}{2m}  
	\begin{cases}
-\cosh \left(m\left(\frac{1}{2}+s-t\right) \right) + \frac{\delta_2 \cosh \left(m\left(\frac{1}{2}-t\right) \right) - m \, \delta_1 \, \sinh\left(m\left(\frac{1}{2}-t\right)\right)}{m^2+\delta_2},		& 0\le s\le t \le 1\\ \\
-\cosh \left(m\left(\frac{1}{2}+t-s\right) \right) + \frac{\delta_2 \cosh \left(m\left(\frac{1}{2}-t\right) \right) - m \, \delta_1 \, \sinh\left(m\left(\frac{1}{2}-t\right)\right)}{m^2+\delta_2},		& 0\le t<s\leq 1.
	\end{cases}
\end{equation*}

\subsubsection{$M=0$}
For the case $M=0$, we cannot apply formula \eqref{70} and we need to compute $G_{0,\delta_1,\delta_2}$ directly.

It is clear that the solutions of the equation $u''(t)=\sigma(t)$, $t\in I$ are given by the expression
\begin{equation}\label{rous}
	u(t)=c_{1}+c_{2}t+\displaystyle \int_{0}^{t} \left(t-s\right) \sigma(s)ds.
\end{equation}
So, $u(0)-u(1)=-c_{2}+ \int_{0}^{1} \left(s-1\right) \sigma(s) ds$. 

On the other hand,
\begin{equation*}
	\displaystyle \int_{0}^{1} u(t)dt=\displaystyle \int_{0}^{1} \left(c_{1}+c_{2}t+\displaystyle \int_{0}^{t} \left(t-s\right) \sigma(s) ds \right)dt =c_{1}+\frac{c_{2}}{2}+\displaystyle \int_{0}^{1} \int_{0}^{t} \left(t-s\right) \sigma(s) \, ds \, dt.
\end{equation*}
Applying Fubini's Theorem, we have that 
\begin{equation*}
	\displaystyle \int_{0}^{1} \int_{0}^{t} \left(t-s\right) \sigma(s) ds=\displaystyle \int_{0}^{1} \int_{s}^{1} \left(t-s\right) \sigma(s) \, dt \, ds =\displaystyle \int_{0}^{1}\left(\frac{s^{2}+1}{2}-s\right) \sigma(s) ds.
\end{equation*}

Imposing the boundary conditions in \eqref{eg}, we arrive at the following system
\begin{equation*}\begin{split}
		\delta_1 c_{1}+\left(\frac{\delta_1}{2}+1\right) c_{2}&= \int_{0}^{1} \left(s-1\right) \sigma(s) ds-\delta_1  \int_{0}^{1} \left(\frac{s^{2}+1}{2}-s\right) \sigma(s) ds,\\
		\delta_2 c_{1}+\frac{\delta_2}{2} c_{2}&= -\int_{0}^{1} \sigma(s) ds -\delta_2 \int_{0}^{1} \left(\frac{s^{2}+1}{2}-s\right) \sigma(s) ds,
\end{split}\end{equation*}
whose solutions are 
\begin{equation*}\begin{split}
		c_{1}&=-\frac{1}{2} \displaystyle \int_{0}^{1} \left(s-1\right) \sigma(s) ds -\int_{0}^{1} \left(\frac{s^{2}+1}{2}-s\right) \sigma(s) ds
		-\frac{\delta_1+2}{2\,\delta_2}  \int_{0}^{1} \sigma(s) ds, \\
		c_{2}&=\displaystyle \int_{0}^{1} \left(s-1\right) \sigma(s)ds +\frac{\delta_1}{\delta_2} \displaystyle \int_{0}^{1} \sigma(s) ds.
\end{split}\end{equation*}
Substituting $c_{1}$ and $c_{2}$ in \eqref{rous} we have that 
\begin{equation*}
	\begin{aligned}
		u(t)=&\,-\frac{1}{2} \displaystyle \int_{0}^{1} \left(s-1\right) \sigma(s) ds -\int_{0}^{1} \left(\frac{s^{2}+1}{2}-s\right) \sigma(s) ds-\frac{\delta_1+2}{2\,\delta_2} \displaystyle \int_{0}^{1} \sigma(s) ds\\
		&+\displaystyle \int_{0}^{1} t\left(s-1\right) \sigma(s)ds +\frac{\delta_1}{\delta_2} \displaystyle \int_{0}^{1} t \,\sigma(s) ds+\displaystyle \int_{0}^{1} \left(t-s\right) \sigma(s) ds\\
		=&\,\displaystyle \int_{0}^{1} G_{0,\delta_1,\delta_2}(t,s) \, \sigma(s) ds,
	\end{aligned}
\end{equation*}
being 
\begin{equation}
		\label{e-G-0}
	G_{0,\delta_1,\delta_2}(t,s)=\frac{1}{2\,\delta_2} \begin{cases}
		-2+\delta_1\,(-1+2t)-s\, \delta_2\,(1+s-2t), & 	0\leq s\leq t \leq 1, \\ \\
		-2+\delta_1 \,(-1+2t)-\delta_2\,(s-1)\,(s-2t), & 0\le t<s\leq 1.
	\end{cases}
\end{equation}

\subsection{Regions of constant sign of the Green's function}

Now, we are in a position to obtain the regions of constant sign of the Green's function as a function of the parameters $M$, $\delta_1$ and $\delta_2$. 

 To this end, we notice that, by direct differentiation on \eqref{e-G-1-2} and \eqref{e-G-0}, the following identities hold:
\[\frac{\partial }{\partial \delta_1}G_{M,\delta_1,\delta_2}(t,s)= \frac{\omega_1(t)}{M-\delta_2} \quad \text{for } M\neq 0, \ M\neq \delta_2  \]
and
\[\frac{\partial }{\partial \delta_1}G_{0,\delta_1,\delta_2}(t,s)= \frac{1}{\delta_2}\left(t-\frac{1}{2}\right),\]
which implies that $\frac{\partial }{\partial \delta_1}G_{M,\delta_1,\delta_2}$ will change sign depending on $t$. As a consequence, there will be some values of $t$ for which $G_{M,\delta_1,\delta_2}$ will increase with respect to $\delta_1$ and some other values of $t$ for which $G_{M,\delta_1,\delta_2}$ will decrease with respect to $\delta_1$. As an immediate consequence we deduce the following result.
\begin{corollary}\label{cor-sign-delta1}
	The two following properties hold:	
	\begin{itemize}
\item If $M$ and $\delta_2$ are such that $G_{M,0,\delta_2}>0$ on $I \times I$, then $G_{M,\delta_1,\delta_2}$ is either positive or changes its sign on $I \times I$.
\item If $M$ and $\delta_2$ are such that $G_{M,0,\delta_2}<0$ on $I \times I$, then $G_{M,\delta_1,\delta_2}$ is either negative or changes its sign on $I \times I$.
\end{itemize}
\end{corollary}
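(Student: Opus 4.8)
The plan is to exploit the one feature of the derivative formulas displayed just before the statement that survives the sign change in $t$: although $\partial_{\delta_1} G_{M,\delta_1,\delta_2}(t,s)$ does change sign with $t$, it vanishes identically along the central line $t=\tfrac12$. Indeed, recall $\omega_1=\omega_2'$, and from the explicit expressions of $\omega_2$ recorded next to \eqref{e-G-1-2} one checks that, both for $M=m^2>0$ and for $M=-m^2<0$, the function $\omega_2$ is symmetric about the midpoint, i.e.\ $\omega_2(1-t)=\omega_2(t)$ (the argument $\tfrac{m}{2}(2t-1)$ is antisymmetric about $t=\tfrac12$, while $\cos$ and $\cosh$ are even). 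Consequently $\omega_1=\omega_2'$ is antisymmetric about $t=\tfrac12$, so $\omega_1(\tfrac12)=0$. For $M=0$ the corresponding factor $t-\tfrac12$ trivially vanishes there. Hence in every case
\[
\frac{\partial }{\partial \delta_1}G_{M,\delta_1,\delta_2}\!\left(\tfrac12,s\right)=0 \qquad \forall\, s\in I.
\]

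Integrating this identity in $\delta_1$ (or simply evaluating \eqref{e-G-1-2} and \eqref{e-G-0} at $t=\tfrac12$) I would then conclude that the value of the Green's function along the central line is completely independent of $\delta_1$:
\[
G_{M,\delta_1,\delta_2}\!\left(\tfrac12,s\right)=G_{M,0,\delta_2}\!\left(\tfrac12,s\right),\qquad \forall\, s\in I,\ \forall\,\delta_1\in\mathbb{R}.
\]
This is the whole content of the corollary: no matter how $\delta_1$ is chosen, the perturbation cannot move the values of $G$ on the slice $\{t=\tfrac12\}$.

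With this in hand the dichotomy is immediate. In the first case the hypothesis $G_{M,0,\delta_2}>0$ on $I\times I$ gives $G_{M,0,\delta_2}(\tfrac12,s)>0$ for every $s$, whence $G_{M,\delta_1,\delta_2}(\tfrac12,s)>0$ for every $s$ and every $\delta_1$. Thus $G_{M,\delta_1,\delta_2}$ always attains strictly positive values, so it can never be nonpositive on $I\times I$; its only admissible constant sign is therefore the positive one, and if it fails to be positive everywhere it must take a negative value somewhere and hence change sign. The second case follows verbatim by replacing \emph{positive} with \emph{negative} (equivalently, by applying the first case to $-G_{M,\delta_1,\delta_2}$). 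The argument is essentially free of obstacles; the only step demanding care is the verification that $\omega_1(\tfrac12)=0$, i.e.\ the symmetry of $\omega_2$ about the midpoint, which is where all the real work sits — everything afterwards is a direct reading of the derivative formulas.
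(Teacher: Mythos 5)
Your proof is correct and follows essentially the same route as the paper: both arguments read off the formula $\frac{\partial}{\partial \delta_1}G_{M,\delta_1,\delta_2}(t,s)=\frac{\omega_1(t)}{M-\delta_2}$ (resp.\ $\frac{1}{\delta_2}(t-\frac12)$ for $M=0$) to conclude that varying $\delta_1$ cannot push $G$ uniformly in one direction, so some values of $G_{M,\delta_1,\delta_2}$ retain the sign of $G_{M,0,\delta_2}$. The only (harmless) difference is that you pin this down on the line $t=\tfrac12$, where $\omega_1$ vanishes and $G$ is literally independent of $\delta_1$, whereas the paper argues from the fact that $\omega_1$ takes both signs, so that for any $\delta_1$ there is a $t$-region on which $G_{M,\delta_1,\delta_2}\ge G_{M,0,\delta_2}$.
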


Furthermore, the following result can be easily verified.
\begin{lemma}
If $M$ and $\delta_2$ are such that $G_{M,0,\delta_2}$ changes sign on $I \times I$, then $G_{M,\delta_1,\delta_2}$ also changes its sign on $I \times I$ for every $\delta_1\in \R$.
\end{lemma}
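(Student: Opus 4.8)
The plan is to argue by contradiction, exploiting two structural features already established. First, the dependence of $G_{M,\delta_1,\delta_2}$ on $\delta_1$ is affine, with a coefficient depending only on $t$: from expression~\eqref{70_bis} (when $M\neq 0$) the $\delta_1$-contribution is $\tfrac{\delta_1\,\omega_1(t)}{M-\delta_2}$, and from~\eqref{e-G-0} (when $M=0$) it is $\tfrac{\delta_1(2t-1)}{2\,\delta_2}$; in both regimes the $\delta_1$-term is of the form $\delta_1\,\psi(t)$ with $\psi$ independent of $s$. Second, Lemma~\ref{G-symmetric} exchanges $\delta_1$ with $-\delta_1$. Since the case $\delta_1=0$ is exactly the hypothesis, I would fix $\delta_1\neq 0$ and suppose, for contradiction, that $G_{M,\delta_1,\delta_2}$ does \emph{not} change sign on $I\times I$; without loss of generality assume $G_{M,\delta_1,\delta_2}\ge 0$ there, the nonpositive case being identical.

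The first step transfers the constant sign to the opposite value of $\delta_1$. By Lemma~\ref{G-symmetric} we have $G_{M,\delta_1,\delta_2}(t,s)=G_{M,-\delta_1,\delta_2}(1-t,1-s)$, and since $(t,s)\mapsto(1-t,1-s)$ maps $I\times I$ bijectively onto itself, the assumption $G_{M,\delta_1,\delta_2}\ge 0$ yields $G_{M,-\delta_1,\delta_2}\ge 0$ on $I\times I$ as well.

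The second step is an averaging identity. Because the $\delta_1$-term $\delta_1\,\psi(t)$ is linear in $\delta_1$ and free of $s$, replacing $\delta_1$ by $-\delta_1$ merely flips its sign, so that
\[
G_{M,0,\delta_2}(t,s)=\tfrac{1}{2}\bigl(G_{M,\delta_1,\delta_2}(t,s)+G_{M,-\delta_1,\delta_2}(t,s)\bigr)\qquad\text{for all }(t,s)\in I\times I.
\]
Both summands on the right are nonnegative by the previous step, hence $G_{M,0,\delta_2}\ge 0$ on $I\times I$, which contradicts the hypothesis that $G_{M,0,\delta_2}$ takes both strictly positive and strictly negative values. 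Therefore $G_{M,\delta_1,\delta_2}$ cannot keep a constant sign, and it must change sign for every $\delta_1\in\R$.

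I expect the only delicate point to be the uniform verification that the $\delta_1$-contribution is genuinely odd in $\delta_1$ and independent of $s$ across all relevant regimes ($M>0$, $M<0$ and $M=0$), so that the averaging identity holds without exception; once this is in place the argument is pure sign bookkeeping and presents no substantive obstacle.
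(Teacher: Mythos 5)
Your proof is correct, but it takes a genuinely different route from the paper's. The paper argues by explicit case analysis: for each regime in which $G_{M,0,\delta_2}$ changes sign it reuses the computations of Theorems~\ref{th:Mdelt2_pos}, \ref{th:Mdelt2_neg}, \ref{th:Mdelt2_neg2} and \ref{th:Mdelt2_pos2} to exhibit concrete points, namely $\left(\tfrac12,\tfrac12\right)$ and $(0,0)$ (or the slice $s=0$ when $M>\pi^2$), at which $G_{M,\delta_1,\delta_2}$ takes opposite signs. You instead combine Lemma~\ref{G-symmetric} with the averaging identity $G_{M,0,\delta_2}=\tfrac12\bigl(G_{M,\delta_1,\delta_2}+G_{M,-\delta_1,\delta_2}\bigr)$, which is immediate from \eqref{70_bis} and \eqref{e-G-0} because the $\delta_1$-contribution is odd in $\delta_1$ and independent of $s$ in every regime; a constant sign for one value of $\delta_1$ would then propagate to $-\delta_1$ by symmetry and hence to $\delta_1=0$ by averaging, a contradiction. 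This is sound, shorter, and uniform over $M>0$, $M<0$, $M=0$ and $M>\pi^2$ alike. The only point you pass over silently is that $G_{M,\pm\delta_1,\delta_2}$ must exist whenever $G_{M,0,\delta_2}$ does; this holds because the solvability condition ($\delta_2\neq M$ and $M\neq 4k^2\pi^2$ for $M\neq 0$, respectively $\delta_2\neq 0$ for $M=0$) does not involve $\delta_1$, but it merits a sentence. What the paper's longer computation buys is explicit localization of the sign change (centre versus corner of the square), which feeds its later discussion; what yours buys is brevity and independence from the earlier optimality theorems.
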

\begin{proof}
It is immediately verified using arguments similar to Theorems \ref{th:Mdelt2_pos}, \ref{th:Mdelt2_neg}, \ref{th:Mdelt2_neg2} and \ref{th:Mdelt2_pos2}. In particular, it is obtained that:
\begin{enumerate}
\item If $M=m^2\in (0,\pi^2)$ and $\delta_2<-\frac{m^{2}\cos\left(\frac{m}{2}\right)}{1-\cos\left(\frac{m}{2}\right)}$, then $G_{M,\delta_1,\delta_2}\left(\frac{1}{2},\frac{1}{2}\right)<0$ and $G_{M,\delta_1,\delta_2}(0,0)>0$.
\item If $M=m^2\in (0,\pi^2)$ and $\delta_2>\dfrac{m^{2}}{1-\cos\left(\frac{m}{2}\right)}$, then $G_{M,\delta_1,\delta_2}\left(\frac{1}{2},\frac{1}{2}\right)>0$ and ${G_{M,\delta_1,\delta_2}(0,0)<0}$.
\item If $M=-m^2$, with $m\in (0,\infty)$, and $\delta_2>\dfrac{2m^{2}e^{\frac{m}{2}}}{1+e^{m}-2e^{\frac{m}{2}}}$, then $G_{M,\delta_1,\delta_2}\left(\frac{1}{2},\frac{1}{2}\right)>0$ and $G_{M,\delta_1,\delta_2}(0,0)<0$.
\item If $M=-m^2$, with $m\in (0,\infty)$, and $\delta_2<-\dfrac{m^2\left(1+e^{m}\right)}{1+e^{m}-2e^{\frac{m}{2}}}$ then $G_{M,\delta_1,\delta_2}\left(\frac{1}{2},\frac{1}{2}\right)<0$ and $G_{M,\delta_1,\delta_2}(0,0)>0$.
\item If $M>\pi^2$ then $G_{M,\delta_1,\delta_2}(t,0)$ is sign-changing on $I$.
\end{enumerate}
\end{proof}

Moreover, since for any fixed $t\in I$, $G_{M,\delta_1,\delta_2}(t, s)$ is either increasing or decreasing with respect to $\delta_1$, we deduce the following facts:
\begin{itemize}
	\item If $M$ and $\delta_2$ are such that $G_{M,0,\delta_2}>0$ on $I \times I$, then $G_{M,\delta_1,\delta_2}$ will be positive  on $I \times I$ for some values (both positive and negative) of $\delta_1$. In particular, by Lemma~\ref{G-symmetric}, we know that $G_{M,\delta_1,\delta_2}$ will be positive on $I \times I$ for $\delta_1 \in (-\delta_1(\delta_2,M),\delta_1(\delta_2,M))$, where the optimal value $\delta_1(\delta_2,M)$  will be either  $+\infty$ or the smallest positive real value  for which $G_{M,\delta_1(\delta_2,M),\delta_2}$ attains the value  zero at some point.
	\item If $M$ and $\delta_2$ are such that $G_{M,0,\delta_2}<0$ on $I \times I$, then $G_{M,\delta_1,\delta_2}$ will be negative  on $I \times I$ for some values (both positive and negative) of $\delta_1$. In particular, by Lemma~\ref{G-symmetric}, we know that $G_{M,\delta_1,\delta_2}$ will be negative on $I \times I$ for $\delta_1 \in (-\delta_1(\delta_2,M),\delta_1(\delta_2,M))$, where the optimal value $\delta_1(\delta_2,M)$  will be either  $+\infty$ or the smallest positive real value  for which $G_{M,\delta_1(\delta_2,M),\delta_2}$ attains the value  zero at some point.
\end{itemize} 

Similarly to Lemmas~\ref{L:nonneg_van} and \ref{L:nonpos_van}, we can precise the points where a constant sign Green's function may vanish.
\begin{lemma}\label{L:cs_delta1pos}
Let $M<\pi^2$ and $\delta_1>0$. If $G_{M,\delta_1,\delta_2}$  has constant sign on $I \times I$ and vanishes at some point $(t_0,s_0)$, then either $t_0=1$ or $t_0=s_0$.
\end{lemma}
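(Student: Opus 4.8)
The plan is to start from Lemma~\ref{L:cs_van_gen}: since $M<\pi^2$ and $G_{M,\delta_1,\delta_2}$ has constant sign and vanishes at $(t_0,s_0)$, that result already forces $t_0\in\{0,1\}$ or $t_0=s_0$. Thus it only remains to discard the case $t_0=0$ with $s_0\in(0,1]$ (when $s_0=0$ we are in the admissible situation $t_0=s_0$). The whole point is that, unlike the case $\delta_1=0$ treated in Lemmas~\ref{L:nonneg_van} and \ref{L:nonpos_van}, now $G_{M,\delta_1,\delta_2}(0,s)\neq G_{M,\delta_1,\delta_2}(1,s)$, so the two endpoints play asymmetric roles; this asymmetry is precisely what will let us rule out $t_0=0$ while leaving $t_0=1$ as an admissible zero.

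First I would record the boundary identity
\begin{equation*}
G_{M,\delta_1,\delta_2}(0,s)-G_{M,\delta_1,\delta_2}(1,s)=\frac{\delta_1}{M-\delta_2},\qquad s\in I,
\end{equation*}
which follows by applying the operator $B_1(u)=u(0)-u(1)$ to $G_{M,\delta_1,\delta_2}(\cdot,s)$, or directly from \eqref{e-G-1-2} (for $M\neq 0$) and \eqref{e-G-0} (for $M=0$), using that $\omega_2(0)=\omega_2(1)$ and $\omega_2'(0)-\omega_2'(1)=1$ while $G_{M,0,0}(\cdot,s)$ satisfies the homogeneous periodic condition.

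Next I would pin down the sign of $M-\delta_2$ in terms of the constant sign of $G_{M,\delta_1,\delta_2}$. Taking $\sigma\equiv 1$, the associated solution $u(t)=\int_0^1 G_{M,\delta_1,\delta_2}(t,s)\,ds$ satisfies $u''+Mu=1$ together with $u'(0)-u'(1)=\delta_2\int_0^1 u$. Integrating the equation over $I$ and using this boundary condition gives $(M-\delta_2)\int_0^1 u(t)\,dt=1$. Since a Green's function is never identically zero, if $G_{M,\delta_1,\delta_2}\ge 0$ then $\int_0^1 u>0$ and hence $M-\delta_2>0$, whereas if $G_{M,\delta_1,\delta_2}\le 0$ then $\int_0^1 u<0$ and hence $M-\delta_2<0$. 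The same conclusion can be reached from Corollary~\ref{cor-sign-delta1} together with the sign analysis of $G_{M,0,\delta_2}$ carried out in Theorems~\ref{th:Mdelt2_pos} and \ref{th:Mdelt2_pos2}.

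Finally I would combine the two ingredients. Suppose $G_{M,\delta_1,\delta_2}(0,s_0)=0$ with $s_0\in(0,1]$. The boundary identity yields $G_{M,\delta_1,\delta_2}(1,s_0)=-\delta_1/(M-\delta_2)$, and since $\delta_1>0$ this value has the sign of $-(M-\delta_2)$. If $G_{M,\delta_1,\delta_2}\ge 0$ we have $M-\delta_2>0$, so $G_{M,\delta_1,\delta_2}(1,s_0)<0$, a contradiction; if $G_{M,\delta_1,\delta_2}\le 0$ we have $M-\delta_2<0$, so $G_{M,\delta_1,\delta_2}(1,s_0)>0$, again a contradiction. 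Hence $t_0=0$ forces $s_0=0$, i.e. $t_0=s_0$, and the only possibilities left are $t_0=1$ or $t_0=s_0$. The main obstacle is the second step: the value identity alone is not conclusive, since for $G_{M,\delta_1,\delta_2}\ge 0$ one must still exclude the regime $M-\delta_2<0$ (in which $G_{M,\delta_1,\delta_2}(1,s_0)>0$ would be compatible with non-negativity). Establishing the exact correspondence between the constant sign of $G_{M,\delta_1,\delta_2}$ and the sign of $M-\delta_2$ is therefore the crux, and it is exactly what singles out $t_0=1$, rather than $t_0=0$, as the admissible endpoint.
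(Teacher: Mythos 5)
Your proposal is correct and follows essentially the same route as the paper: reduce to discarding $t_0=0$ via Lemma~\ref{L:cs_van_gen}, then apply the first boundary condition to $G_{M,\delta_1,\delta_2}(\cdot,s_0)$ to get $G_{M,\delta_1,\delta_2}(0,s_0)-G_{M,\delta_1,\delta_2}(1,s_0)=\delta_1\int_0^1 G_{M,\delta_1,\delta_2}(t,s_0)\,dt$ and derive a sign contradiction at $t=1$. The paper is slightly more direct in that it reads off the sign of the right-hand side from the assumed constant sign of $G_{M,\delta_1,\delta_2}(\cdot,s_0)$ itself, so your intermediate steps (evaluating the integral as $1/(M-\delta_2)$ and pinning down the sign of $M-\delta_2$ via $\sigma\equiv 1$), while valid, are not needed.
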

\begin{proof}
Let us suppose that $G_{M,\delta_1,\delta_2}\ge 0$ (the case $G_{M,\delta_1,\delta_2}\le 0$ would be analogous). From Lemma~\ref{L:cs_van_gen}, we only need to discard the case $t_0=0$. Suppose then that $G_{M,\delta_1,\delta_2}(0,s_0)=0$ for some $s_0\in(0,1)$. In such a case, from the equality
\[G_{M,\delta_1,\delta_2}(0,s_0)-G_{M,\delta_1,\delta_2}(1,s_0) = \delta_1 \int_{0}^{1} G_{M,\delta_1,\delta_2}(t,s_0)\, dt > 0,\]
we deduce that $G_{M,\delta_1,\delta_2}(1,s_0)<0$, which is a contradiction. Therefore, $G_{M,\delta_1,\delta_2}$ cannot vanish at $(0,s_0)$.
\end{proof}

\subsection{Negativeness of $G_{M,\delta_1,\delta_2}$}
Now, we study the region where the Green's function is negative on the square of definition. We distinguish two situations.
\subsubsection{$M\neq 0$}
We analyze the region where $G_{M,\delta_1,\delta_2}$ is negative on $I \times I$. To do this, taking into account Corollary~\ref{cor-sign-delta1}, we fix $M\neq 0$ and $\delta_2$ for which $G_{M,0,\delta_2}$ is negative on $I \times I$, that is, $\delta_2\in \left(M,f(M)\right)$, with 
\begin{equation*}
 	f(M)=\begin{cases}
 		\frac{m^{2}}{1-\cos\left(\frac{m}{2}\right)}, & M=m^2, \ m\in (0,\pi),\\[8pt]
 		\frac{2m^{2} e^{\frac{m}{2}}}{1+e^{m}-2 e^{\frac{m}{2}}}, & M=-m^2<0, \ m\in (0,\infty).
 	\end{cases}
 \end{equation*}

Taking into account Lemma~\ref{G-symmetric}, we only need to do the calculations for $\delta_1>0$ (since case $\delta_1<0$ is followed by symmetry). 
On the other hand, it is immediate to verify that  function $\omega_1$ is strictly decreasing on $I$,  $\omega_1(0) =\frac{1}{2}$ and $\omega_1(1)=-\frac{1}{2}$.  

The characterization of the set is given on the following result.
\begin{theorem}
	Let $M<\pi^{2}, M\neq 0$ and  $\delta_2\in \left(M,f(M)\right)$, then $G_{M,\delta_1,\delta_2}$ is strictly negative on $I\times I$ if and only if   
\[|\delta_1|< 2 \,(M-\delta_2) \, G_{M,0,\delta_2}\left(1,\frac{1}{2}\right).\]
\end{theorem}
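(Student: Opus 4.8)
The plan is to establish both implications by reducing the study over the whole square $I\times I$ to the single edge $t=1$, exploiting that the $\delta_1$-perturbation in \eqref{70_bis} is independent of $s$. First I would invoke Lemma~\ref{G-symmetric}: since $G_{M,\delta_1,\delta_2}(t,s)=G_{M,-\delta_1,\delta_2}(1-t,1-s)$, the region of negativity is invariant under $\delta_1\mapsto-\delta_1$, so it is enough to treat $\delta_1\ge 0$, the value $\delta_1=0$ being the base case guaranteed by the hypothesis $\delta_2\in(M,f(M))$, which ensures $G_{M,0,\delta_2}<0$. For $\delta_1>0$ I would write, using \eqref{70_bis} and $\omega_1=\omega_2'$,
\[G_{M,\delta_1,\delta_2}(t,s)=G_{M,0,\delta_2}(t,s)+\frac{\delta_1\,\omega_1(t)}{M-\delta_2},\]
and record that the added term does not depend on $s$, is smooth in $t$, and, since $M-\delta_2<0$ while $\omega_1$ is strictly decreasing with $\omega_1(0)=\tfrac12$, $\omega_1(\tfrac12)=0$, $\omega_1(1)=-\tfrac12$, it is negative for $t<\tfrac12$ and positive for $t>\tfrac12$. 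Hence the perturbation only pushes $G$ towards zero on $t>\tfrac12$, the worst location being $t=1$.

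The \emph{necessity} is then immediate: if $G_{M,\delta_1,\delta_2}<0$ on $I\times I$, then in particular $G_{M,\delta_1,\delta_2}(1,\tfrac12)<0$; substituting $\omega_1(1)=-\tfrac12$ in the identity above gives $G_{M,0,\delta_2}(1,\tfrac12)-\tfrac{\delta_1}{2(M-\delta_2)}<0$, and multiplying by the negative factor $2(M-\delta_2)$ (which reverses the inequality) yields precisely $\delta_1<2(M-\delta_2)\,G_{M,0,\delta_2}(1,\tfrac12)$, i.e. the stated bound with $|\delta_1|=\delta_1$.

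For the \emph{sufficiency}, set $\delta_1^{\ast}:=2(M-\delta_2)\,G_{M,0,\delta_2}(1,\tfrac12)>0$ (positive as a product of two negative factors) and argue by a first-touch/continuity argument in $\delta_1$. Since $G_{M,0,\delta_2}<0$ strictly and $G$ depends continuously on $\delta_1$, if negativity failed for some $\delta_1\in(0,\delta_1^{\ast})$ there would be a smallest $\delta_1'\le\delta_1^{\ast}$ at which $G_{M,\delta_1',\delta_2}$ is non-positive and vanishes somewhere. Lemma~\ref{L:cs_delta1pos} confines this zero to $t_0=1$ or $t_0=s_0$; the interior diagonal points $t_0=s_0\in(0,1)$ are ruled out exactly as in Lemma~\ref{L:nonpos_van}, because the smooth perturbation leaves the jump $\partial_t G(t^+,t)-\partial_t G(t^-,t)=1$ unchanged, while the corner $(0,0)$ is excluded since there $\omega_1(0)>0$ makes the perturbation strictly negative, so $G$ is even more negative. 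Thus the zero lies on the edge $t=1$, where $G_{M,\delta_1',\delta_2}(1,s)=G_{M,0,\delta_2}(1,s)-\tfrac{\delta_1'}{2(M-\delta_2)}$ attains its maximum in $s$ at $s=\tfrac12$, the $s$-profile being that of $G_{M,0,\delta_2}(1,\cdot)$ whose maximum at $s=\tfrac12$ was established in the proofs of Theorems~\ref{th:Mdelt2_neg} and \ref{th:Mdelt2_neg2}. This forces $\delta_1'=\delta_1^{\ast}$, contradicting $\delta_1'\le\delta_1^{\ast}$ with the strict inequality $\delta_1<\delta_1^{\ast}$; hence $G<0$ throughout $I\times I$.

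The main obstacle is precisely this reduction in the sufficiency step: combining Lemma~\ref{L:cs_delta1pos} with the derivative-jump argument to pin every candidate first zero to the single point $(1,\tfrac12)$, and checking that no boundary or corner point can produce an earlier zero. Once that reduction is in place the remaining work is routine, namely the evaluation of $G_{M,0,\delta_2}(1,\tfrac12)$ and the verification that the $s$-maximum on the edge $t=1$ occurs at $s=\tfrac12$, both of which follow from the explicit expressions and the earlier theorems.
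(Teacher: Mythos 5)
Your proof is correct, and it reaches the bound in the same way for the necessity direction as the paper does: evaluate the decomposition $G_{M,\delta_1,\delta_2}(t,s)=G_{M,0,\delta_2}(t,s)+\frac{\delta_1}{M-\delta_2}\,\omega_1(t)$ at $\left(1,\frac{1}{2}\right)$ using $\omega_1(1)=-\frac{1}{2}$, and use the symmetry of Lemma~\ref{G-symmetric} to reduce to $\delta_1>0$. Where you genuinely diverge is the sufficiency direction. The paper argues in one line that the maximum of $G_{M,\delta_1,\delta_2}$ over $I\times I$ is attained at $\left(1,\frac{1}{2}\right)$, because both summands are individually maximized there (the first by the edge analysis of Theorems~\ref{th:Mdelt2_neg} and~\ref{th:Mdelt2_neg2}, the second because $\omega_1$ is decreasing and the prefactor $\frac{\delta_1}{M-\delta_2}$ is negative), so negativity on the whole square is equivalent to negativity at that single point. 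You instead run a continuity/first-touch argument in $\delta_1$, invoking Lemma~\ref{L:cs_delta1pos} to confine a first zero to $t_0=1$ or the diagonal, discarding the diagonal by the unchanged derivative jump as in Lemma~\ref{L:nonpos_van}, and then locating the zero at $s=\frac{1}{2}$ on the edge $t=1$. Both routes are valid. The paper's is shorter; yours is somewhat more robust in that it does not require knowing a priori that the global maximum of $G_{M,0,\delta_2}$ over the whole square sits on the edge $t=1$ --- it only uses the characterization of where a non-positive Green's function can vanish, which is precisely what the localization lemmas supply. One small point to tidy: the first-touch step implicitly uses that $\left\{\delta_1\ge 0: G_{M,\delta_1,\delta_2}<0 \text{ on } I\times I\right\}$ is an interval containing $0$; this does hold because $G_{M,\delta_1,\delta_2}(t,s)$ is affine in $\delta_1$ at each fixed $(t,s)$, but it is worth saying explicitly since $G$ is not monotone in $\delta_1$ uniformly in $t$.
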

\begin{proof}
For $\delta_1>0$, it is immediately deduced from expression
\[G_{M,\delta_1,\delta_2}(t,s)=G_{M,0,\delta_2}(t,s) +\frac{\delta_1}{M-\delta_2} \, \omega_1(t)\] and the fact that $\omega_1$ attains its minimum at $t=1$ and $G_{M,0,\delta_2}$ attains its maximum at $(t,s)=\left(1,\frac{1}{2}\right)$. As a consequence	
\[\max_{t,s\in I} G_{M,\delta_1,\delta_2}(t,s)=G_{M,\delta_1,\delta_2}\left(1,\frac{1}{2}\right) =G_{M,0,\delta_2}\left(1,\frac{1}{2}\right) -\frac{\delta_1}{2\left(M-\delta_2\right)}.\] 
Thus, $G_{M,\delta_1,\delta_2}$ is negative if and only if $\delta_1<2\,(M-\delta_2) \, G_{M,0,\delta_2}\left(1,\frac{1}{2}\right)$.

The case $\delta_1<0$ follows by symmetry.
\end{proof}

\subsubsection{$M= 0$}
For the negative case we set $\delta_2\in\left(0,8\right)$ where $G_{0,0,\delta_2}$ is negative. 
\begin{theorem}\label{th:Mcero_neg}
If $M=0$ and $\delta_2\in \left(0,8\right)$, then $G_{0,\delta_1,\delta_2}$ is negative on $I\times I$ if and only if 
	\begin{center}
		$ |\delta_1|<2-\frac{\delta_2}{4}$.
	\end{center}
\end{theorem}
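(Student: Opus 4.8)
The plan is to follow the pattern of the preceding negativity theorems. First, by Lemma~\ref{G-symmetric} it suffices to treat $\delta_1>0$: the relation $G_{0,\delta_1,\delta_2}(t,s)=G_{0,-\delta_1,\delta_2}(1-t,1-s)$ transfers any sign statement for $\delta_1<0$ into one for $-\delta_1>0$, while the case $\delta_1=0$ is exactly the earlier result that $G_{0,0,\delta_2}<0$ on $I\times I$ precisely when $\delta_2\in(0,8)$. So fix $\delta_1>0$ and $\delta_2\in(0,8)$. Using the identity $\frac{\partial}{\partial \delta_1}G_{0,\delta_1,\delta_2}(t,s)=\frac{1}{\delta_2}\bigl(t-\frac12\bigr)$ recorded before Corollary~\ref{cor-sign-delta1} (equivalently, reading off the $\delta_1$-term in \eqref{e-G-0}), I would write the decomposition
\[G_{0,\delta_1,\delta_2}(t,s)=G_{0,0,\delta_2}(t,s)+\frac{\delta_1}{\delta_2}\Bigl(t-\tfrac12\Bigr).\]
Since the perturbation is strictly increasing in $t$ and independent of $s$, the whole problem reduces to locating $\max_{I\times I}G_{0,\delta_1,\delta_2}$ and deciding when it is negative.

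The core step is to show that this maximum is attained at $(1,\tfrac12)$. Here I would invoke Lemma~\ref{L:cs_delta1pos}: because $G_{0,0,\delta_2}<0$ throughout $I\times I$ and $G_{0,\delta_1,\delta_2}$ depends continuously on $\delta_1$, the smallest $\delta_1>0$ at which the maximum first reaches $0$ forces a zero lying on the segment $t=1$ or on the diagonal $t=s$, so it is enough to maximise along these two curves. On $t=1$ one gets $G_{0,\delta_1,\delta_2}(1,s)=\frac{s}{2}-\frac{s^2}{2}-\frac1{\delta_2}+\frac{\delta_1}{2\delta_2}$, an inverted parabola with maximum at $s=\frac12$; on the diagonal, $G_{0,\delta_1,\delta_2}(t,t)=\frac{t^2}{2}-\frac{t}{2}-\frac1{\delta_2}+\frac{\delta_1}{\delta_2}(t-\frac12)$ is convex in $t$ and hence maximal at an endpoint $t\in\{0,1\}$, and both endpoint values are dominated by $G_{0,\delta_1,\delta_2}(1,\tfrac12)$ thanks to the extra $\frac18$ contributed by the parabola. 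Thus the global maximum equals $G_{0,\delta_1,\delta_2}(1,\tfrac12)=\frac18-\frac1{\delta_2}+\frac{\delta_1}{2\delta_2}$.

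Finally, $G_{0,\delta_1,\delta_2}<0$ on $I\times I$ is equivalent to this maximum being negative, that is $\frac18-\frac1{\delta_2}+\frac{\delta_1}{2\delta_2}<0$; multiplying by $2\delta_2>0$ gives $\delta_1<2-\frac{\delta_2}{4}$. Combining this with the symmetry reduction yields $|\delta_1|<2-\frac{\delta_2}{4}$, as asserted.

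I expect the only delicate point to be the rigorous confirmation that the maximum sits at $(1,\tfrac12)$ rather than at some competing interior point. To make the continuity argument self-contained, I would additionally verify directly that on the lower triangle $s\le t$ the function is increasing in $t$ (its $t$-slope is $s+\frac{\delta_1}{\delta_2}>0$), so the maximum there is pushed to $t=1$, and that on the upper triangle $t<s$ the unique interior critical point is a saddle, since the Hessian of $G_{0,0,\delta_2}$ has determinant $-1$. This rules out an interior maximum and forces the extremum onto the boundary curves already analysed; carrying out this case check carefully is where the main effort lies.
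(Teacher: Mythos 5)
Your proposal is correct and follows essentially the same route as the paper's own proof: reduce to $\delta_1>0$ by symmetry, use Lemma~\ref{L:cs_delta1pos} to confine the relevant extremum to the curves $t=1$ and $t=s$, obtain the bounds $\delta_1<2-\frac{\delta_2}{4}$ from $r\left(\frac12\right)$ and $\delta_1<2$ from $h(1)$, and take the minimum. The only cosmetic difference is your optional ``self-contained'' Hessian check (which, to be complete, would also have to dispose of the edges $t=0$ and $s=1$ of the upper triangle, although these are already handled by the lemma and by $G_{0,\delta_1,\delta_2}(0,s)=G_{0,\delta_1,\delta_2}(1,s)-\frac{\delta_1}{\delta_2}$); the paper instead analyses the critical point $c=\frac12-\frac{\delta_1}{\delta_2}$ of the diagonal restriction, which amounts to the same convexity observation.
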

\begin{proof}
	Suppose that $\delta_1>0$ and let us calculate the maximum of $G_{0,\delta_1,\delta_2}$ (whose expression is given on \eqref{e-G-0}). From Lemma~\ref{L:cs_delta1pos} we know that such maximum is either at $t=1$ or $t=s$.
	
	At points of the form $\left(1,s\right)$ we have that $r(s)=G_{0,\delta_1,\delta_2}\left(1,s\right)$ has an absolute maximum at $s=\frac{1}{2}$. So, $r(s)<0$ for all $s\in I$ if and only if $r\left(\frac{1}{2}\right)<0$, that is, 	$0<\delta_1<2-\frac{\delta_2}{4}$.
	
	Let us consider now the restriction to the diagonal, that is, $h(s)=G_{0,\delta_1,\delta_2}(s,s)$.
	Given $c=\frac{1}{2}-\frac{\delta_1}{\delta_2}<\frac{1}{2}$, it holds that $h'(s)<0$ for $s<c$, $h'(c)=0$ and $h'(s)>0$ for $s>c$. Thus, $c$ is a minimum of $h$. If  $c\in (0,\frac{1}{2})$, $h$ attains its maximum either at $s=1$ or at $s=0$  while if $c\le 0$ then $h'>0$ on $(0,1]$ and the maximum is attained at $s=1$. In any case,
	$h(0)=\frac{-2-\delta_1}{2\delta_2}<0$ and $h(1)=\frac{-2+\delta_1}{2\delta_2}>h(0)$. So, $h(s)<0$ if and only if $h(1)<0$, that is, $0<\delta_1<2$.
	
Therefore, for $\delta_1>0$, $G_{M,\delta_1,\delta_2}<0$ on $I \times I$ if and only if
\[\delta_1<\min\left\{2-\frac{\delta_2}{4}, \, 2 \right\}=2-\frac{\delta_2}{4}. \]
	
Using the symmetry of $G_{0,\delta_1,\delta_2}$ with respect to $\delta_1$ we conclude the result.
\end{proof}

\subsection{Positiveness of $G_{M,\delta_1,\delta_2}$}
Let us calculate now the regions where $G_{M,\delta_1,\delta_2}$ is positive. As usual, we distinguish two cases.
\subsubsection{$M\neq 0$}
Taking into account Corollary~\ref{cor-sign-delta1}, let us fix $M\neq 0$ and $\delta_2$ such that $G_{M,0,\delta_2}$ is positive on $I \times I$, that is, $\delta_2\in \left(g(M),M\right)$, with
 \begin{equation}
 	\label{e-g(m)}
	g(M):=\begin{cases}
		g_{1}\left(\sqrt{M}\right), & M\in (0,\pi^2),\\
		g_{2}\left(\sqrt{-M}\right), & M<0,
	\end{cases}
\end{equation}
where $g_{1}(m)=-\dfrac{m^{2} \cos\left(\frac{m}{2}\right)}{1-\cos\left(\frac{m}{2}\right)}$ and $g_{2}(m)=-\frac{m^2\,\cosh\left(\frac{m}{2}\right)}{\cosh\left(\frac{m}{2}\right)-1}$.	

Now we define the function
 \begin{equation}
	\label{e-k(M)}
	k(M):=\begin{cases}
		k_{1}\left(\sqrt{M}\right), & M\in (0,\pi^2),\\
		k_{2}\left(\sqrt{-M}\right), & M<0,
	\end{cases}
\end{equation}
where $k_1(m)=-m^2\cot^2\left(\frac{m}{2}\right)$ and $k_2(m)=
-m^{2}\coth^{2}\left(\frac{m}{2}\right)$.

It is easy to verify that 
\[
g(M) <k(M)<M, \quad \mbox{for all} \quad M \neq 0.
\]

We shall consider now two different cases, depending on the sign of the parameter $M$. We start with $M>0$.

\begin{theorem}\label{th:delta1Mpos} 	Let functions $g$ and $k$ be defined in \eqref{e-g(m)} and \eqref{e-k(M)} respectively. Assume that $M=m^{2}$, $m\in (0,\pi)$ and  $\delta_2\in \left(g(M),M\right)$,  then the two following properties are fulfilled:
\begin{enumerate}
\item If $\delta_2\in (k(M),M)$ then $G_{M,\delta_1,\delta_2}$ is strictly positive on $I\times I$ if and only if
\[|\delta_1|<m\cot\left(\frac{m}{2}\right).\]
\item If $\delta_2\in (g(M),k(M)]$, then  $G_{M,\delta_1,\delta_2}$ is strictly positive on $I\times I$ if and only if 
\begin{equation*}
|\delta_1| <\frac{\sqrt{ -\delta_2^{2}+\cos^{2}\left(\frac{m}{2}\right)\left(m^{2}-\delta_2\right)^{2}}}{ m}. 
\end{equation*}
\end{enumerate}
\end{theorem}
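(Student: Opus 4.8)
The plan is to reduce to $\delta_1>0$ by the symmetry Lemma~\ref{G-symmetric} (the case $\delta_1<0$ follows upon replacing $(t,s)$ by $(1-t,1-s)$, which is exactly what turns the bounds into conditions on $|\delta_1|$). Since $\delta_2\in(g(M),M)$ forces $G_{M,0,\delta_2}>0$ on $I\times I$, Corollary~\ref{cor-sign-delta1} guarantees that $G_{M,\delta_1,\delta_2}$ is either positive or sign-changing; hence the task is to pin down the exact threshold in $\delta_1$ at which positivity breaks. Throughout I will use the decomposition $G_{M,\delta_1,\delta_2}(t,s)=G_{M,0,\delta_2}(t,s)+\frac{\delta_1}{M-\delta_2}\,\omega_1(t)$, with $M-\delta_2>0$ and $\omega_1$ strictly decreasing, $\omega_1(0)=\frac12$, $\omega_1(1)=-\frac12$.

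The decisive simplification is Lemma~\ref{L:cs_delta1pos}: for $\delta_1>0$ a constant-sign Green's function can vanish only on the edge $t=1$ or on the diagonal $t=s$. Thus $G_{M,\delta_1,\delta_2}>0$ on $I\times I$ if and only if its restrictions $r(s)=G_{M,\delta_1,\delta_2}(1,s)$ and $h(t)=G_{M,\delta_1,\delta_2}(t,t)$ are positive, and I will minimise each using the explicit formula for $M=m^2$. The edge restriction $r$ has the form $\frac{\csc(m/2)}{2m}\big[\cos\!\big(m(s-\frac12)\big)+\text{const}\big]$, so its minimum over $s\in I$ is attained at the corners $s\in\{0,1\}$, and requiring it positive reduces to $m^2\cos(m/2)-m\delta_1\sin(m/2)>0$, i.e. $\delta_1<m\cot(m/2)$: exactly the bound in item~(1). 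Writing the diagonal restriction as $h(t)=\frac{\csc(m/2)}{2m}\big[\cos(m/2)+\frac{f(\theta)}{m^2-\delta_2}\big]$ with $\theta=m(\frac12-t)\in[-\frac m2,\frac m2]$ and $f(\theta)=\delta_2\cos\theta+m\delta_1\sin\theta=R\cos(\theta-\phi)$, $R=\sqrt{\delta_2^2+m^2\delta_1^2}$ for an appropriate phase $\phi$, the minimum of $h$ is attained at the interior critical point with value proportional to $\cos(m/2)(m^2-\delta_2)-R$ whenever that point lies in $[-\frac m2,\frac m2]$, and otherwise at the endpoint $t=1$ (recovering the edge bound). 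In the interior case positivity reads $R<\cos(m/2)(m^2-\delta_2)$, which rearranges to the bound in item~(2).

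What remains — and is the real content — is to determine, as a function of $\delta_2$, which of the two minima controls positivity. The interior critical point lies in $[-\frac m2,\frac m2]$ precisely when $\delta_2<-m\delta_1\cot(m/2)$; evaluating this at the candidate threshold $\delta_1=m\cot(m/2)$ turns it into $\delta_2<k(M)$. Increasing $\delta_1$ from $0$ then shows that for $\delta_2>k(M)$ (in particular for every $\delta_2\ge0$) the interior critical point has already left the interval while $h$ is still positive, so positivity is lost first on the edge $t=1$, giving item~(1); whereas for $g(M)<\delta_2\le k(M)$ the interior diagonal minimum reaches zero first, giving item~(2). Consistency at the interface is ensured by the identity $\cos^2(m/2)\big(m^2-k(M)\big)^2-k(M)^2=m^4\cot^2(m/2)$, so the two thresholds coincide exactly at $\delta_2=k(M)$; and $g(M)<k(M)<M$ (already noted) makes the split exhaustive, with the item~(2) bound degenerating to $0$ as $\delta_2\downarrow g(M)$, consistently with $G_{M,0,g(M)}$ touching zero.

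The main obstacle is this bookkeeping of competing minima: one must track over which $\delta_1$-range the interior critical point of $h$ is active, confirm that on that range it indeed yields the global minimum of $G_{M,\delta_1,\delta_2}$ (it dominates the edge value because the diagonal endpoint $t=1$ corresponds to $f(-m/2)\ge -R$), and show that the binding constraint switches from the edge bound to the diagonal bound exactly at $\delta_2=k(M)$. Everything else — the endpoint localisation of the minima, the amplitude-phase rewriting of $f$, and the two boundary identities at $k(M)$ and $g(M)$ — is routine once this structure is in place.
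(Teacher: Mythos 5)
Your proposal is correct and follows essentially the same route as the paper: reduce to $\delta_1>0$ by symmetry, use Lemma~\ref{L:cs_delta1pos} to restrict attention to the edge $t=1$ and the diagonal $t=s$, obtain the edge bound $\delta_1<m\cot\left(\frac{m}{2}\right)$ from the corner values, locate the interior critical point of the diagonal restriction, and show the binding constraint switches at $\delta_2=k(M)$. The only cosmetic difference is your amplitude--phase rewriting $\delta_2\cos\theta+m\delta_1\sin\theta=R\cos(\theta-\phi)$, which the paper replaces by a direct computation of the critical point $c=\frac12-\frac1m\arctan\left(\frac{m\delta_1}{\delta_2}\right)$; the case bookkeeping and the consistency identities at $k(M)$ and $g(M)$ match the paper's argument.
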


\begin{proof}
Let us assume that $\delta_1>0$ and calculate the minimum of $G_{M,\delta_1,\delta_{2}}$. From Lemma~\ref{L:cs_delta1pos} we know that such minimum is either at $t=1$ or $t=s$.

Let us distinguish several cases:
\begin{enumerate}
\item $\delta_2\ge 0$ (that is, $\delta_2\in [0,M)$):

At the points of form $\left(1,s\right)$ the function to be studied is 
\begin{equation*}
r(s)=G_{M,\delta_1,\delta_2}\left(1,s\right)=\dfrac{\cos\left(\frac{m}{2}\left(1-2s\right)\right) \csc\left(\frac{m}{2}\right)}{2m}+\dfrac{\delta_2 \cot\left(\frac{m}{2}\right)}{2m\left(m^{2}-\delta_2\right)}-\dfrac{\delta_1}{2\left(m^{2}-\delta_2\right)},
\end{equation*}
whose minimum is attained at $s=0$ and $s=1$ (indeed, $r(0)=r(1)$). Thus, $r(s)>0$ for all $s\in I$ if and only if $r(0)=r(1)>0$, that is, $0<\delta_1<m \cot\left(\frac{m}{2}\right)$.

At the diagonal $t=s$ we obtain the following function 
\begin{equation*}
	h(s)=G_{M,\delta_1,\delta_2}\left(s,s\right)=\frac{\csc \left(\frac{m}{2}\right)}{2 m} \left(\cos \left(\frac{m}{2}\right) + \frac{\delta_1\, m \sin \left(m \left(\frac{1}{2}-s\right)\right)+\delta_2 \cos \left(m \left(\frac{1}{2}-s\right)\right)}{m^2-\delta_2}\right),
\end{equation*}
which attains its minimum at $s=1$ and so $h(s)>0$ on $I$ if and only if $h(1)=r(1)>0$. 

Thus, from Lemma \ref{L:cs_delta1pos}, we have that $G_{M,\delta_1,\delta_2}>0$ on $I \times I$ if and only if $0<\delta_1<m\cot\left(\frac{m}{2}\right)$.

\item $\delta_2<0$ (that is, $\delta_2\in (g(M),0)$):

At the points of the form $(1,s)$, analogously to the previous case, we obtain that $r(s)>0$ on $I$ if and only if $r(1)>0$, that is, $0<\delta_1<m\,\cot\left(\frac{m}{2}\right)$.

At the diagonal $t=s$, we have that $h'(c)=0$, $h'(s)>0$ for $s>c$ and $h'(s)<0$ for $s<c$, with $c=\frac{1}{2}-\frac{1}{m}\arctan\left(\frac{m\,\delta_1}{\delta_2} \right)$. So, $c$ is a minimum of $h$. Moreover, we note that $c\in I$ if and only if $\frac{1}{m} \arctan\left(\frac{m\delta_1}{\delta_2}\right)\in\left[-\frac{1}{2},0\right)$, that is, $0<\delta_1\leq -\frac{\delta_2}{m} \tan\left(\frac{m}{2}\right)$. Therefore, we subdivide the case $\delta_2<0$ into two cases:
\begin{itemize}
\item[(a)] If  $\delta_1\ge -\frac{\delta_2}{m} \tan\left(\frac{m}{2}\right)$, then $h'(s)<0$ for all $s\in [0,1)$ and the minimum of $h$ is attained at $s=1$. Thus, $h(s)>0$ on $I$ if and only if $h(1)=r(1)>0$, that is, $0<\delta_1<m\cot\left(\frac{m}{2} \right)$. As a consequence, $G_{M,\delta_1,\delta_2}>0$ on $I \times I$ for $0<\delta_1<m\cot\left(\frac{m}{2} \right)$.

We note that the two previous conditions, $\delta_1\ge -\frac{\delta_2}{m} \tan\left(\frac{m}{2}\right)$ and $0<\delta_1<m\cot\left(\frac{m}{2} \right)$, are compatible if and only if $\delta_2>-m^2\cot^2\left(\frac{m}{2}\right)\equiv k(M)$.
\item[(b)] If $0<\delta_1<-\frac{\delta_2}{m} \tan\left(\frac{m}{2}\right)$, then $h$ attains an  absolute minimum at $c\in \left(0,1\right)$. In this case, $h(c)>0$ if and only if 
\begin{equation*}
	\delta_1<\frac{\sqrt{m^{4} -2m^{2} \delta_2 -\delta_2^{2}+\left(m^{2}-\delta_2\right)^{2} \cos(m) }}{\sqrt{2}  \, m} = \frac{\sqrt{ -\delta_2^{2}+\cos^{2}\left(\frac{m}{2}\right)\left(m^{2}-\delta_2\right)^{2}}}{ m}.
\end{equation*}
We note that $-\delta_2^{2}+\cos^{2}\left(\frac{m}{2}\right)\left(m^{2}-\delta_2\right)^{2}>0$ for $m\in (0,\pi)$ and
$\delta_2\in (g(M),0)$. Indeed,  $-\delta_2^{2}+\cos^{2}\left(\frac{m}{2}\right)\left(m^{2}-\delta_2\right)^{2}>0$ if and only if  $\left|\cos\left(\frac{m}{2}\right)\right|\,|m^{2}-\delta_2|\ge |\delta_2|$. Since $m\in (0,\pi)$ and $\delta_2<0$, previous inequality is equivalent to
\[\delta_2\ge -\frac{m^{2} \cos\left(\frac{m}{2}\right)}{1-\cos\left(\frac{m}{2}\right)}\equiv g(M).\]
\end{itemize}

Moreover, we note that
\[\min\left\{-\frac{\delta_2}{m} \tan\left(\frac{m}{2}\right), \, \frac{\sqrt{ -\delta_2^{2}+\cos^{2}\left(\frac{m}{2}\right)\left(m^{2}-\delta_2\right)^{2}}}{ m} \right\}\]
\[=\begin{cases}
\frac{\sqrt{ -\delta_2^{2}+\cos^{2}\left(\frac{m}{2}\right)\left(m^{2}-\delta_2\right)^{2}}}{ m}, & \delta_2\in (g(M),k(M)), \\
-\frac{\delta_2}{m} \tan\left(\frac{m}{2}\right), & \delta_2\in(k(M),M).
\end{cases} \]

As a consequence, we conclude that:
\begin{itemize}
\item If $\delta_2\in (g(M),k(M)]$ then, from $(b)$, $G_{M,\delta_1,\delta_2}>0$ for
\[0<\delta_1<\min\left\{-\frac{\delta_2}{m} \tan\left(\frac{m}{2}\right), \, \frac{\sqrt{ -\delta_2^{2}+\cos^{2}\left(\frac{m}{2}\right)\left(m^{2}-\delta_2\right)^{2}}}{ m} \right\}\]
\[= \frac{\sqrt{ -\delta_2^{2}+\cos^{2}\left(\frac{m}{2}\right)\left(m^{2}-\delta_2\right)^{2}}}{ m} .\]
\item If $\delta_2\in (k(M),M)$ then, from $(a)$, $G_{M,\delta_1,\delta_2}>0$ for
\[-\frac{\delta_2}{m}\tan\left(\frac{m}{2}\right)<\delta_1 < m\cot\left(\frac{m}{2}\right) \]
and, from $(b)$, $G_{M,\delta_1,\delta_2}>0$ for
\[ 0<\delta_1 \le  \min\left\{-\frac{\delta_2}{m} \tan\left(\frac{m}{2}\right), \, \frac{\sqrt{ -\delta_2^{2}+\cos^{2}\left(\frac{m}{2}\right)\left(m^{2}-\delta_2\right)^{2}}}{ m} \right\} \]
\[= \frac{\sqrt{ -\delta_2^{2}+\cos^{2}\left(\frac{m}{2}\right)\left(m^{2}-\delta_2\right)^{2}}}{m}.\]
Thus, $G_{M,\delta_1,\delta_2}>0$ on $I \times I$ for $0<\delta_1<m\cot\left(\frac{m}{2}\right)$.
\end{itemize}
\end{enumerate}

The fact that the obtained bounds are optimal follows from Lemma \ref{L:cs_delta1pos}.

Using the symmetry with respect to $\delta_1$ we conclude the result.
\end{proof}

In the sequel we consider the case $M<0$.

\begin{theorem}
	Let functions $g$ and $k$ be defined in \eqref{e-g(m)} and \eqref{e-k(M)} respectively. For any $M=-m^{2}$, with $m>0$ and  $\delta_2\in (g(M),M)$, it holds that
\begin{enumerate}
\item If $\delta_2\in (g(M),k(M)]$ then  $G_{M,\delta_1,\delta_2}$ is strictly positive on $I\times I$ if and only if
\[|\delta_1|<\frac{\sqrt{\delta_2^{2}-\left(m^{2}+\delta_2\right)^{2} \cosh^{2}\left(\frac{m}{2}\right)}}{ m}. \]
\item If $\delta_2\in (k(M),M)$ then  $G_{M,\delta_1,\delta_2}$ is strictly positive on $I\times I$ if and only if
\[|\delta_1|< m \coth\left(\frac{m}{2}\right).\]
\end{enumerate}   
\end{theorem}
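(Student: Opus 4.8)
The plan is to follow the same strategy as in the proof of Theorem~\ref{th:delta1Mpos}, replacing the trigonometric functions of the case $M>0$ by their hyperbolic counterparts and paying particular attention to the one genuinely new phenomenon: the relevant interior critical point is now governed by $\operatorname{arctanh}$ rather than by $\arctan$. By Lemma~\ref{G-symmetric} it suffices to treat $\delta_1>0$, the case $\delta_1<0$ then following from the symmetry $G_{M,\delta_1,\delta_2}(t,s)=G_{M,-\delta_1,\delta_2}(1-t,1-s)$. Since $\delta_2\in(g(M),M)$ guarantees $G_{M,0,\delta_2}>0$ on $I\times I$, and since $G_{M,\delta_1,\delta_2}(t,s)=G_{M,0,\delta_2}(t,s)+\frac{\delta_1}{M-\delta_2}\,\omega_1(t)$ by \eqref{70_bis}, positivity can be lost only at a vanishing point; by Lemma~\ref{L:cs_delta1pos} (applicable because $M=-m^2<\pi^2$) such a point must lie on the line $t=1$ or on the diagonal $t=s$. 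Hence it is enough to minimise $G_{M,\delta_1,\delta_2}$ on these two sets.

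First I would examine $r(s):=G_{M,\delta_1,\delta_2}(1,s)$. Using the explicit expression for $M=-m^2$, the function $r$ is, up to an additive constant, a negative multiple of $\cosh\bigl(m(s-\tfrac12)\bigr)$, so it attains its minimum at the endpoints $s=0$ and $s=1$ with $r(0)=r(1)$; imposing $r(1)>0$ and clearing the (negative) factor $m^2+\delta_2$ collapses everything to the single inequality $\delta_1<m\coth\!\left(\frac m2\right)$. Next I would study the restriction to the diagonal, $h(s):=G_{M,\delta_1,\delta_2}(s,s)$. A direct differentiation shows that $h'(s)$ vanishes exactly when $\tanh\!\bigl(m(\tfrac12-s)\bigr)=\frac{m\delta_1}{\delta_2}$, that is, at $c=\frac12-\frac1m\operatorname{arctanh}\!\left(\frac{m\delta_1}{\delta_2}\right)$, and a sign analysis of $h'$ identifies $c$ as a minimum of $h$.

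The main obstacle is precisely the book-keeping around this critical point, which is where the case $M<0$ departs from Theorem~\ref{th:delta1Mpos}: because $\operatorname{arctanh}$ is defined only on $(-1,1)$, the interior minimum $c$ belongs to $I$ only while $0<\delta_1\le-\frac{\delta_2}{m}\tanh\!\left(\frac m2\right)$. I would therefore split into the two subcases used for $M>0$. When $\delta_1\ge-\frac{\delta_2}{m}\tanh\!\left(\frac m2\right)$, $h$ is monotone decreasing on $I$ and its minimum sits at $s=1$, returning the bound $\delta_1<m\coth\!\left(\frac m2\right)$, which is compatible with the hypothesis precisely when $\delta_2>k(M)=-m^2\coth^2\!\left(\frac m2\right)$. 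When $0<\delta_1<-\frac{\delta_2}{m}\tanh\!\left(\frac m2\right)$, the minimum is the interior value $h(c)$; substituting $\sinh(m(\tfrac12-c))=\frac{m\delta_1}{\delta_2}\cosh(m(\tfrac12-c))$ together with $\cosh^2=(1-\tanh^2)^{-1}$ reduces $h(c)>0$ to $\delta_1<\frac1m\sqrt{\delta_2^2-(m^2+\delta_2)^2\cosh^2\!\left(\frac m2\right)}$. At this point I would record that the radicand is positive exactly for $\delta_2>g(M)$, so that the bound is meaningful on the whole admissible range.

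Finally I would compare the two candidate thresholds $-\frac{\delta_2}{m}\tanh\!\left(\frac m2\right)$ and $\frac1m\sqrt{\delta_2^2-(m^2+\delta_2)^2\cosh^2\!\left(\frac m2\right)}$; squaring and simplifying shows that their order reverses exactly at $\delta_2=k(M)$. Consequently, for $\delta_2\in(g(M),k(M)]$ only the interior bound is active and $G_{M,\delta_1,\delta_2}>0$ iff $0<\delta_1<\frac1m\sqrt{\delta_2^2-(m^2+\delta_2)^2\cosh^2\!\left(\frac m2\right)}$, whereas for $\delta_2\in(k(M),M)$ the two subcases glue together to give $0<\delta_1<m\coth\!\left(\frac m2\right)$. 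Optimality of these bounds is guaranteed by Lemma~\ref{L:cs_delta1pos}, and the symmetry of Lemma~\ref{G-symmetric} upgrades $\delta_1$ to $|\delta_1|$, yielding both statements.
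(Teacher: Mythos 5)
Your proposal is correct and follows essentially the same route as the paper: reduction to $\delta_1>0$ via Lemma~\ref{G-symmetric}, localisation of a potential zero to $t=1$ or $t=s$ via Lemma~\ref{L:cs_delta1pos}, endpoint analysis of $r(s)=G_{M,\delta_1,\delta_2}(1,s)$ giving $\delta_1<m\coth\left(\frac{m}{2}\right)$, the $\operatorname{arctanh}$-governed interior minimum of the diagonal restriction with the same case split at $\delta_1=-\frac{\delta_2}{m}\tanh\left(\frac{m}{2}\right)$, and the threshold comparison switching at $\delta_2=k(M)$. No gaps; this matches the paper's argument step for step.
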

\begin{proof}
Let us assume that $\delta_1>0$ and calculate the minimum of $G_{M,\delta_1,\delta_{2}}$. From Lemma~\ref{L:cs_delta1pos} we know that such minimum is either at $t=1$ or $t=s$.

At the points of the form $(1,s)$ we get  the function  
\begin{equation*}
r(s)=G_{M,\delta_1,\delta_2}(1,s)=\frac{\csch\left(\frac{m}{2}\right)}{2 m} \left(-\cosh \left(m \left(s-\frac{1}{2}\right)\right) + \frac{\delta_1 m \sinh \left(\frac{m}{2}\right)+\delta_2 \cosh \left(\frac{m}{2}\right)}{m^2+\delta_2}\right),
\end{equation*}
whose minimum is attained at $s=0$ and $s=1$ (indeed, $r(0)=r(1)$). Thus, $r(s)>0$ for all $s\in I$ if and only if $r(0)=r(1)>0$, that is, $0<\delta_1<m \coth\left(\frac{m}{2}\right)$.

At the diagonal $t=s$ we obtain the following function 
\begin{equation*}
h(s)=G_{M,\delta_1,\delta_2}(s,s)=\frac{\csch\left(\frac{m}{2}\right)}{2 m} \left(-\cosh \left(\frac{m}{2}\right)+\frac{\delta_2 \cosh \left(m \left(\frac{1}{2}-s\right)\right)-\delta_1 m \sinh \left(m \left(\frac{1}{2}-s\right)\right)}{m^2+\delta_2}\right).
\end{equation*}
It occurs that $h'(s)=0$ if and only if    $\tanh\left(\frac{m}{2}\left(1-2s\right)\right)=\frac{m \delta_1}{\delta_2}$. Since  $\delta_2<0$ and $\tanh^{-1}\left(x\right)$ exists for $x\in [-1,1]$, we have that $h$ has a critical point $c=\frac{1}{2}-\frac{1}{m} \tanh^{-1}\left(\frac{m\delta_1}{\delta_2}\right)$ if and only if $-1\leq\frac{m\delta_1}{\delta_2}<0$, that is, $\delta_1\le -\frac{\delta_2}{m}$. In such a case, it occurs that $h'(s)<0$ for $s<c$ and $h'(s)>0$ for $s>c$. Moreover, we can see that $c\in I$ if and only if  $0<\delta_1\le -\frac{\delta_2}{m} \tanh\left(\frac{m}{2}\right)\left(<-\frac{\delta_2}{m}\right)$.  Therefore, we distinguish two cases:
\begin{itemize}
\item[(a)] If $\delta_1> -\frac{\delta_2}{m} \tanh\left(\frac{m}{2}\right)$, then $h'(s)<0$ for all $s\in I$ and $h$ has a minimum at $s=1$. Then $h(s)>0$ for all $s\in I$ if and only if $h(1)=r(1)>0$, that is, if and only if 	$0<\delta_1<m \coth\left(\frac{m}{2}\right)$. As a consequence,  $G_{M,\delta_1,\delta_2}>0$ on $I \times I$ for $0<\delta_1<m \coth\left(\frac{m}{2}\right)$. 

We note that the two previous conditions, $\delta_1> -\frac{\delta_2}{m} \tanh\left(\frac{m}{2}\right)$ and $0<\delta_1<m \coth\left(\frac{m}{2}\right)$, are compatible if and only if $\delta_2>-m^2\,\coth^2\left(\frac{m}{2}\right)\equiv k(M)$.
\item[(b)] If $0<\delta_1\le -\frac{\delta_2}{m} \tanh\left(\frac{m}{2}\right)$, then $h$ attains an  absolute minimum $c\in \left(0,1\right)$. In this case, $h\left(c\right)>0$ (and, consequently, $h(s)>0$ for $s\in I$) if and only if 
\begin{equation*}
	0<\delta_1<\frac{\sqrt{\delta_2^{2}-\left(m^{2}+\delta_2\right)^{2} \cosh^{2}\left(\frac{m}{2}\right)}}{ m}.
\end{equation*}
Note that, analogously to what has been done in Theorem~\ref{th:delta1Mpos}, it can be proved that  $\delta_2^{2}-\left(m^{2}+\delta_2\right)^{2} \cosh^{2}\left(\frac{m}{2}\right)>0$ for $M=-m^2<0$ and $\delta_2\in (g(M),M)$.

Therefore, since $h(c)<h(1)=r(1)$, $G_{M,\delta_1,\delta_2}>0$ on $I \times I$ for 
\begin{equation*}
	0<\delta_1<\frac{\sqrt{\delta_2^{2}-\left(m^{2}+\delta_2\right)^{2} \cosh^{2}\left(\frac{m}{2}\right)}}{ m}.
\end{equation*}
\end{itemize}

Moreover, we note that 
\[\min\left\{-\frac{\delta_2}{m} \tanh\left(\frac{m}{2}\right), \frac{\sqrt{\delta_2^{2}-\left(m^{2}+\delta_2\right)^{2} \cosh^{2}\left(\frac{m}{2}\right)}}{ m}\right\} \]
\[=\begin{cases}
\frac{\sqrt{\delta_2^{2}-\left(m^{2}+\delta_2\right)^{2} \cosh^{2}\left(\frac{m}{2}\right)}}{ m}, & \delta_2\in (g(M),k(M)),\\[4pt]
-\frac{\delta_2}{m} \tanh\left(\frac{m}{2}\right), & \delta_2\in (k(M),M). 
\end{cases}\]

As a consequence, reasoning analogously to previous theorem and using symmetry with respect to $\delta_1$, we conclude that the attained bounds are optimal and the result holds.
\end{proof}

\subsubsection{$M=0$}
As in the previous case we compute the positive sign of $G_{0,\delta_1,\delta_2}$ fixing the value of $\delta_2$. 

\begin{theorem} 
	 	Let $M=0$ and  $\delta_2\in \left(-8,0\right)$, then $G_{0,\delta_1,\delta_2}>0$ on $I\times I$ if and only if 
\begin{equation*}
 |\delta_1|<
 \begin{cases}
 	\frac{1}{2} \sqrt{-8\delta_2-\delta_2^{2}}, &  \delta_2\in(-8,-4),\\
 	2, & \delta_2\in [-4,0).
 \end{cases}
\end{equation*}
\end{theorem}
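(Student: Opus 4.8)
The plan is to follow the scheme already used in Theorem~\ref{th:Mcero_neg}, exploiting the explicit expression \eqref{e-G-0}. First I would invoke Lemma~\ref{G-symmetric} to reduce to the case $\delta_1>0$, the case $\delta_1<0$ then following from the symmetry $G_{0,\delta_1,\delta_2}(t,s)=G_{0,-\delta_1,\delta_2}(1-t,1-s)$, which also explains the absolute value $|\delta_1|$ in the statement. Since $\delta_2\in(-8,0)$ is exactly the range where $G_{0,0,\delta_2}>0$ on $I\times I$, Corollary~\ref{cor-sign-delta1} tells us that $G_{0,\delta_1,\delta_2}$ is either positive or sign-changing, so the threshold on $\delta_1$ is precisely the value at which the minimum of $G_{0,\delta_1,\delta_2}$ over the square first reaches zero. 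By Lemma~\ref{L:cs_delta1pos} (applicable since $M=0<\pi^2$ and $\delta_1>0$), that minimum can only be located at $t=1$ or on the diagonal $t=s$, so it suffices to minimize the two one-variable restrictions $r(s):=G_{0,\delta_1,\delta_2}(1,s)$ and $h(s):=G_{0,\delta_1,\delta_2}(s,s)$.

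Next I would treat the boundary $t=1$. From \eqref{e-G-0}, $r(s)=\tfrac{1}{2\delta_2}\bigl(-2+\delta_1-\delta_2(s^2-s)\bigr)$ is, up to the negative factor $\tfrac{1}{2\delta_2}$, a parabola in $s$ whose bracket is maximized at the endpoints $s\in\{0,1\}$; hence $r$ attains its minimum there, with $r(0)=r(1)=\tfrac{-2+\delta_1}{2\delta_2}$, which is positive iff $\delta_1<2$. The diagonal is the substantial computation: $h(s)=\tfrac{1}{2\delta_2}\bigl(-2+\delta_1(2s-1)-\delta_2 s(1-s)\bigr)$ is again $\tfrac{1}{2\delta_2}$ times a concave quadratic, so $h$ has an interior minimum at $s^\ast=\tfrac12-\tfrac{\delta_1}{\delta_2}>\tfrac12$. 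One then splits according to whether $s^\ast\le 1$, i.e.\ whether $\delta_1\le -\tfrac{\delta_2}{2}$: if $\delta_1>-\tfrac{\delta_2}{2}$ the minimum of $h$ on $I$ is at $s=1$ and reduces to the condition $\delta_1<2$ already found, while if $0<\delta_1\le -\tfrac{\delta_2}{2}$ the minimum is the vertex value, and a direct evaluation of the maximum $c-\tfrac{b^2}{4a}$ of the quadratic shows $h(s^\ast)>0$ iff $\delta_1<\tfrac12\sqrt{-8\delta_2-\delta_2^2}$, the radicand being positive precisely on $(-8,0)$.

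The part requiring care, and the main obstacle, is gluing these regimes into the stated piecewise bound. I would compare the threshold $-\tfrac{\delta_2}{2}$ separating the two diagonal cases with the candidate bound $\tfrac12\sqrt{-8\delta_2-\delta_2^2}$: a short algebraic manipulation shows that $\tfrac12\sqrt{-8\delta_2-\delta_2^2}+\tfrac{\delta_2}{2}$ has the sign of $\delta_2+4$, vanishing at $\delta_2=-4$. For $\delta_2\in[-4,0)$ the square-root bound dominates $-\tfrac{\delta_2}{2}$, so the interior-vertex constraint never fails within its own regime and one passes continuously into the regime governed by $s=1$; positivity then holds exactly for $\delta_1<2$. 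For $\delta_2\in(-8,-4)$ the square-root bound is the smaller, binding one, and positivity holds exactly for $\delta_1<\tfrac12\sqrt{-8\delta_2-\delta_2^2}$. Optimality of both bounds follows from Lemma~\ref{L:cs_delta1pos}, since at the threshold value $G_{0,\delta_1,\delta_2}$ vanishes on the admissible locus. Finally, reinstating the sign of $\delta_1$ via Lemma~\ref{G-symmetric} converts these conditions into the stated inequalities on $|\delta_1|$.
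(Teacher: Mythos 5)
Your proposal is correct and follows essentially the same route as the paper: reduction to $\delta_1>0$ by the symmetry of Lemma~\ref{G-symmetric}, localization of the minimum at $t=1$ or $t=s$ via Lemma~\ref{L:cs_delta1pos}, minimization of $r(s)=G_{0,\delta_1,\delta_2}(1,s)$ at the endpoints giving $\delta_1<2$, the case split on whether the diagonal vertex $s^\ast=\tfrac12-\tfrac{\delta_1}{\delta_2}$ lies in $I$, and the comparison of $-\tfrac{\delta_2}{2}$ with $\tfrac12\sqrt{-8\delta_2-\delta_2^{2}}$ that produces the switch at $\delta_2=-4$. The computations and the gluing of the two regimes match the paper's argument, so no further comment is needed.
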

\begin{proof}
Let us assume that $\delta_1>0$ and calculate the minimum of $G_{0,\delta_1,\delta_{2}}$. From Lemma~\ref{L:cs_delta1pos} we know that such minimum is either at $t=1$ or $t=s$.

As we have seen in Theorem~\ref{th:Mcero_neg}, $r(s)=G_{0,\delta_1,\delta_2}(1,s)$ has its maximum at $s=\frac{1}{2}$ and the minimum at $s=0$ and $s=1$. Hence, $r>0$ if and only if $r(0)=r(1)>0$, that is, $0<\delta_1<2$.

On the other hand, as we have seen in Theorem~\ref{th:Mcero_neg}, $h(s)=G_{0,\delta_1,\delta_2}(s,s)$ has an absolute minimum at $c=\frac{1}{2}-\frac{\delta_1}{\delta_2}$, $h'(s)<0$ for $s<c$ and $h'(s)>0$ for $s>c$.

We distinguish the following cases:
\begin{itemize}
\item If $\delta_1\geq -\frac{\delta_2}{2}$, then $c\ge 1$ and the minimum of $h$ is attained at $s=1$ and $h(1)>0$ if and only if $\delta_1<2$. Since $h(1)=r(1)$, we deduce that if $-\frac{\delta_2}{2}\leq \delta_1<2$ then $G_{0,\delta_1,\delta_2}>0$ on $I \times I$. We note that this is only possible when $\delta_2>-4$.

\item If $0<\delta_1<-\frac{\delta_2}{2}$, then $c\in (0,1)$ and $h(c)>0$ if and only if 	$\delta_1<\frac{1}{2} \sqrt{-8\delta_2-\delta_2^{2}}$. Since $r(1)=h(1)>h(c)$, we deduce that $G_{0,\delta_1,\delta_2}>0$ for
\[0<\delta_1<\min\left\{-\frac{\delta_2}{2}, \frac{1}{2} \sqrt{-8\delta_2-\delta_2^{2}}\right\} = \begin{cases}
	\frac{1}{2} \sqrt{-8\delta_2-\delta_2^{2}}, & \delta_2\in(-8,-4),\\
	-\frac{\delta_2}{2}, & \delta_2\in[-4,0).
\end{cases}\] 
\end{itemize}

In conclusion, $G_{0,\delta_1,\delta_2}(t,s)>0$ on $I \times I$  for all $t,s\in I$ for
\[0<\delta_1<\begin{cases}
		\frac{1}{2} \sqrt{-8\delta_2-\delta_2^{2}}, & \delta_2\in(-8,-4),\\
		2, & \delta_2\in[-4,0).
	\end{cases}\]

Using the symmetry with respect to $\delta_1$ we conclude the result.
\end{proof}

\subsection{A particular case: $\delta_2=0$}
Finally, as a consequence of the previous results, we arrive at the following corollary.
\begin{corollary}
Let's consider the perturbed periodic problem
\begin{equation}\label{to}
\left\{
\begin{aligned}
u''(t)+M u(t)&=\sigma(t),\;\; t\in I,\\
u(0)-u(1)&=\delta_1 \displaystyle \int_{0}^{1} u(s) ds,\\
u'(0)-u'(1)&=0,
\end{aligned}
\right.
\end{equation}
for $M\in \mathbb{R}\setminus\{0 \}$.  The following statements holds:
\begin{enumerate}
\item  If $M=m^{2}>0$, then $G_{M,\delta_1,0}>0$ on $I \times I$  if and only if $m\in (0,\pi)$ and $|\delta_1|<m \cot\left(\frac{m}{2}\right)$.
\item If $M=-m^{2}<0$, then $G_{M,\delta_1,0}<0$ on $I \times I$  if and only if $|\delta_1|<\frac{2m e^{\frac{m}{2}}}{1-e^{m}}$.
\end{enumerate}
\end{corollary}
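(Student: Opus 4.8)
The plan is to read both assertions as direct specializations to $\delta_2=0$ of the general characterizations already established, so that essentially no new analysis is required: the only tasks are to verify that $\delta_2=0$ lands in the correct parameter window in each case and to evaluate $G_{M,0,0}$ at a single point. Note first that problem~\eqref{to} is exactly problem~\eqref{eg} with $\delta_2=0$, hence its Green's function is $G_{M,\delta_1,0}$, and one simply sets $\delta_2=0$ throughout.

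For the case $M=m^2>0$ the target is positivity, so the governing statement is Theorem~\ref{th:delta1Mpos}. I would begin by recording that $G_{M,0,0}>0$ on $I\times I$ holds exactly when $m\in(0,\pi)$. For $m\ge\pi$ the function $G_{M,0,0}$ fails to be strictly positive — it is sign-changing for $m>\pi$ and vanishes at the four corners for $m=\pi$ — and in either situation $G_{M,\delta_1,0}$ cannot be strictly positive: for $m>\pi$ this follows from Corollary~\ref{cor-sign-delta1} together with the subsequent sign-change lemma, while for $m=\pi$ one checks directly that the corner values $G_{\pi^2,\delta_1,0}(0,0)=\frac{\delta_1}{2\pi^2}$ and $G_{\pi^2,\delta_1,0}(1,1)=-\frac{\delta_1}{2\pi^2}$ have opposite signs unless $\delta_1=0$. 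This forces $m\in(0,\pi)$. For such $m$, one reads off from \eqref{e-k(M)} that $k(M)=-m^2\cot^2\!\left(\frac{m}{2}\right)<0<M$, so that $\delta_2=0\in(k(M),M)$, and part~(1) of Theorem~\ref{th:delta1Mpos} applies verbatim, giving $G_{M,\delta_1,0}>0$ on $I\times I$ if and only if $|\delta_1|<m\cot\!\left(\frac{m}{2}\right)$.

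For the case $M=-m^2<0$ the target is negativity, and the relevant tool is the characterization of the negative region for $M\neq0$ established earlier (the theorem asserting $G_{M,\delta_1,\delta_2}<0$ iff $|\delta_1|<2(M-\delta_2)\,G_{M,0,\delta_2}\!\left(1,\frac{1}{2}\right)$, valid for $\delta_2\in(M,f(M))$). Here $G_{M,0,0}<0$ on $I\times I$ for every $M<0$, and since $f(M)=\dfrac{2m^2e^{m/2}}{1+e^m-2e^{m/2}}=\dfrac{2m^2e^{m/2}}{(e^{m/2}-1)^2}>0$, we indeed have $\delta_2=0\in(M,f(M))$. Specializing that theorem to $\delta_2=0$ gives $G_{M,\delta_1,0}<0$ on $I\times I$ if and only if $|\delta_1|<2M\,G_{M,0,0}\!\left(1,\frac{1}{2}\right)$. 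It then remains to evaluate the right-hand side: since at $\left(1,\frac12\right)$ we have $t=1>s=\frac12$, the exponential formula for $G_{M,0,0}$ gives both summands equal to $e^{m/2}$, whence $G_{M,0,0}\!\left(1,\frac12\right)=\dfrac{e^{m/2}}{m(1-e^m)}$. Substituting $M=-m^2$ into $|\delta_1|<2M\,G_{M,0,0}\!\left(1,\frac12\right)$ and simplifying produces the bound displayed in the corollary.

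The point I expect to be the only genuine obstacle is not a hard estimate but the bookkeeping of which sub-interval $\delta_2=0$ belongs to: in the positive case one must confirm that $0$ lies in $(k(M),M)$ rather than in $(g(M),k(M)]$, so that the clean bound $m\cot\!\left(\frac{m}{2}\right)$ applies instead of the square-root bound, and in the negative case one must confirm $0\in(M,f(M))$ and then carry out the single explicit evaluation of $G_{M,0,0}$ at $\left(1,\frac12\right)$. The mildly non-routine ingredient is the necessity of $m\in(0,\pi)$ in the first item, which is not a mere substitution but requires invoking the non-constant-sign behaviour of $G_{M,0,0}$ for $m\ge\pi$ and its propagation to $G_{M,\delta_1,0}$.
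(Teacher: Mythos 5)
Your proposal is correct and takes essentially the same route the paper intends: the paper gives no written proof beyond ``as a consequence of the previous results,'' and your argument is precisely that specialization --- checking $0\in\left(k(M),M\right)$ so that part (1) of Theorem~\ref{th:delta1Mpos} yields $|\delta_1|<m\cot\left(\frac{m}{2}\right)$, checking $0\in\left(M,f(M)\right)$ so that the negativity theorem yields $|\delta_1|<2M\,G_{M,0,0}\left(1,\frac{1}{2}\right)$, and handling the necessity of $m\in(0,\pi)$ via the sign-change lemmas. The only caveat is that your evaluation gives $2M\,G_{M,0,0}\left(1,\frac{1}{2}\right)=\frac{2me^{m/2}}{e^{m}-1}>0$, so the denominator $1-e^{m}$ printed in item~2 of the corollary is a sign typo in the paper rather than the quantity your computation actually produces.
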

In this case, the graph showing the sign of the Green's function on the $(M,\delta_1)$ plane can be seen in Figure~\ref{Fig:sign_G2}.
\begin{figure}[H]
\begin{center}
	\includegraphics[width=15cm]{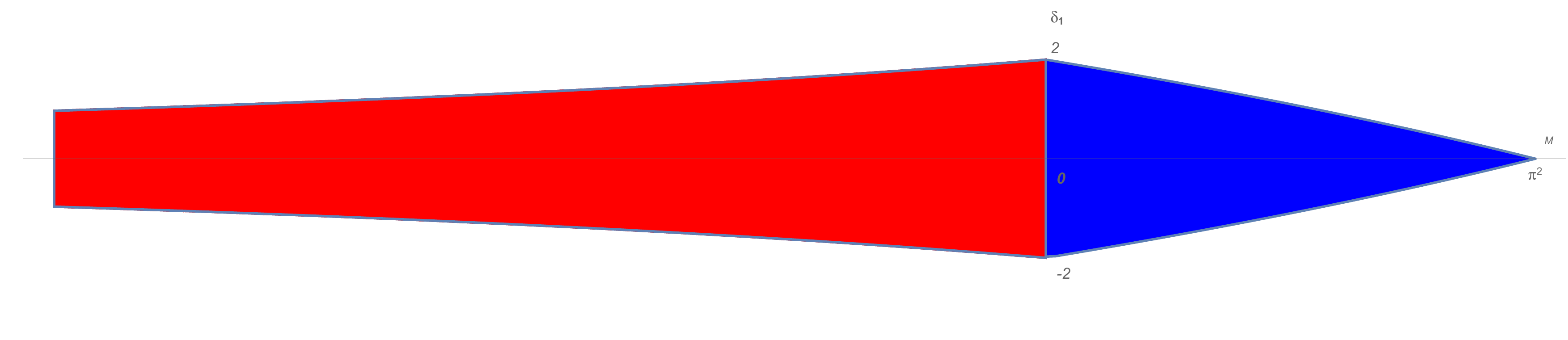}
\end{center}
\caption{The blue and red areas represent the regions of positive and negative sign of the Green's function, respectively.} \label{Fig:sign_G2}
\end{figure}

\end{document}